\newtheorem{theorem}{Theorem}
\newtheorem{definition}[theorem]{Definition}
\newtheorem{lemma}[theorem]{Lemma}
\newtheorem{notation}[theorem]{Notation}
\newtheorem{proposition}[theorem]{Proposition}
\newtheorem{remark}[theorem]{Remark}
\newenvironment{proof}[1][Proof]{\noindent\textbf{#1.} }{\
\rule{0.5em}{0.5em}}
\def\A{\mathbb{A}}
\def\N{\mathbb{N}}
\def\Q{\mathbb{Q}}
\def\L{\mathbb{L}}
\def\V{\mathbb{V}}
\def\square{\ \rule{0.5em}{0.5em}}
\def\cF{\mathcal{F}}
\def\cG{\mathcal{G}}
\def\fP{\mathfrak{P}}
\begin{document}

\title{Effective Differential  L\"uroth's Theorem\thanks{
Partially supported by the following Argentinian grants: ANPCyT PICT 2007/816, UBACYT 2002010010041801 (2011-2014) and  UBACYT 20020090100069 (2010-2012).}}

\author{Lisi D'Alfonso$^\natural$ \and Gabriela Jeronimo$^\sharp$ \and Pablo Solern\'o$^\sharp$\\[4mm]
{\small $\natural$ Departamento de Ciencias Exactas, Ciclo B\'asico Com\'un, Universidad de Buenos Aires,} \\{\small Ciudad Universitaria, 1428, Buenos Aires, Argentina}\\[2mm]
{\small $\sharp$ Departamento de Matem\'atica and IMAS, UBA-CONICET,} \\{\small Facultad de Ciencias Exactas y Naturales,Universidad de Buenos Aires,}\\ {\small Ciudad Universitaria, 1428, Buenos Aires, Argentina}\\[3mm]
{\small E-mail addresses: lisi@cbc.uba.ar, jeronimo@dm.uba.ar, psolerno@dm.uba.ar}
}
\maketitle

\begin{abstract}
This paper focuses on effectivity aspects of the L\"uroth's theorem
in differential fields. Let $\mathcal{F}$ be an ordinary
differential field of characteristic $0$ and $\mathcal{F}\langle u
\rangle$ be the field of differential rational functions generated
by a single indeterminate $u$. Let be given non constant rational
functions $v_1,\ldots,v_n\in \mathcal{F}\langle u\rangle$ generating
a differential subfield $\mathcal{G}\subseteq \mathcal{F}\langle
u\rangle$. The differential L\"uroth's theorem proved by Ritt in
1932 states that there exists $v\in \mathcal G$ such that $\mathcal{G}= \mathcal{F}\langle v\rangle$. Here we prove that the total order and degree  of a generator $v$ are bounded by $\min _j \textrm{ord} (v_j)$ and $(nd(e+1)+1)^{2e+1}$, respectively, where $e:=\max_j \textrm{ord} (v_j)$ and $d:=\max_j \deg (v_j)$. As a byproduct, our techniques enable us to compute a L\"uroth generator by dealing with a polynomial ideal in a polynomial ring in finitely many variables.
\end{abstract}

%
%
%

\section{Introduction}

In 1876, J. L\"uroth in \cite{Luroth} presented his famous result, currently known as the  L\"uroth's Theorem: if $ k \subset L\subset k(u)$ is an extension of fields, where  $k(u)$ is the field of rational functions in one variable $u$, then $L=k(v)$ for a suitable  $v \in L$ (see \cite[\S 10.2]{vdW} for a modern proof). In 1893 G. Castelnuovo solved the same problem for rational function fields in two variables over an algebraically closed ground field. For three variables, L\"uroth's problem has been solved negatively.

In 1932 J.F. Ritt \cite{Ritt1} addressed the differential version of
this result: {\it Let $\cF$ be an ordinary differential field of
characteristic $0$, $u$ an indeterminate over $\cF$ and $\cF\langle
u \rangle$ the smallest field containing $\cF$, $u$ and all its
derivatives. Then, if $\cG$ is a differential field such that $\cF
\subset \cG \subset \cF\langle u \rangle$, there is  an element
$v\in \cG$ such that $\cG = \cF\langle v \rangle$.} Such an element
will be called a L\"uroth generator of the extension $\cF \subset
\cG$.

In fact, Ritt considered the case of a differential field $\cF$ of meromorphic functions in an open set of the complex plane and $\cG$ a finitely generated extension of $\cF$. Later, E. Kolchin in \cite{Kolchin2} and \cite{Kolchin3} gave a new proof of this theorem  for any differential field of characteristic $0$ and without the hypothesis of finiteness on $\cG$. Contrary to the classical setting, the differential L\"uroth problem fails in the case of two variables (see \cite{Ollivier}). A possible weak generalization of L\"uroth's theorem to dimension greater than one is the conjecture in control theory which states that every system linearizable by dynamic feedback is linearizable by endogenous feedback, or in algebraic terms, that a subextension of a differentially flat extension is differentially flat (\cite{FLMR}, \cite[Section 4.2]{Fliess}).

\bigskip

The present paper deals with quantitative aspects of the
Differential L\"uroth's Theorem and the computation of a L\"uroth
generator $v$ of a finite differential field extension. More
precisely (see Propositions \ref{orderv} and
\ref{deggenerator}):

\begin{theorem}
 Let $\cF$ be an ordinary differential field of
characteristic $0$, $u$ differentially transcendental over $\cF$ and
$\cG:= \cF\langle P_1(u)/Q_1(u), \dots, P_n(u)/Q_n(u)\rangle$, where
$P_j, Q_j\in \cF\{ u \}$ are relatively prime differential
polynomials of order at most $e\ge 1$ (i.e. at least one derivative
of $u$ occurs in $P_j$ or $Q_j$ for some $j$) and total degree
bounded by $d$ such that $P_j/Q_j\notin \cF$ for every $1\le j \le
n$. Then, any L\"uroth generator $v$ of $\cF\subset
\cG$ can
 be written as the quotient of two relatively prime differential polynomials $P(u), Q(u)\in \cF\{u\}$ with order
 bounded by $\min\{{\rm ord}(P_j/Q_j); 1\le j \le n\}$
and  total degree bounded by   $\min\{(d+1)^{(e+1)n}, (nd(e+1)+1)^{2e+1}$.
\end{theorem}


Our approach combines elements of Ritt's and Kolchin's proofs
(mainly the introduction of the differential polynomial ideal
related to the graph of the rational map $u\mapsto (P_j/Q_j)_{1\le
j\le n}$) with estimations concerning the order and the
differentiation index of differential ideals developed in
\cite{Sadik}, \cite{DJMS} and \cite{DJS06}. These estimations allow
us to reduce the problem of computing a L\"uroth generator to a Gr\"obner basis computation in a polynomial ring in finitely many variables (see Remark \ref{computation}).


\bigskip

An algorithmic version of Ritt's proof of the differential L\"uroth's Theorem is given in \cite{GaoXu}. The authors propose a deterministic algorithm that relies on the computation of ascending chains by means of the Wu-Ritt's zero decomposition algorithm; however, no quantitative questions on the order or the degree of the L\"uroth generator are addressed. For effectiveness considerations of the classical not differential version, we refer the interested reader to \cite{Netto}, \cite{Sederberg}, \cite{AGR}, \cite{GRS}, \cite{GRS2}, \cite{Cheze}.

\bigskip

This paper is organized as follows. In Section
\ref{sec:preliminaries} we introduce the notations,  definitions and
previous results from differential algebra (mainly concerning the
order and the differentiation index) needed in the rest of the
paper. In Section \ref{sec:problem} we present a straightforward
optimal upper bound on the order of any L\"uroth generator and we
discuss some ingredients that appear in the classical proofs of
L\"uroth's Theorem by Ritt and Kolchin and that we will use in our
arguments. In Section \ref{sec:reduction}, by means of estimates on
the differentiation index and the order of an associated DAE system,
we reduce the computation of a L\"uroth generator to an elimination
problem in effective classical algebraic geometry; as a byproduct,
we obtain upper bounds for the degree of a L\"uroth generator.
 Finally, in Section \ref{sec:examples} we show two simple examples illustrating our constructions.

\section{Preliminaries}\label{sec:preliminaries}

In this section we introduce the notation used throughout the paper and recall some definitions and results from differential algebra.

\subsection{Basic definitions and notation} \label{basic}

A \emph{differential field} $(\cF, \Delta)$ is a field $\cF$ with a
set of derivations $\Delta =\{ \delta_i\}_{i\in I}$, $\delta_i: \cF
\to \cF$. In this paper, all differential fields are \emph{ordinary}
differential fields; that is to say, they are equipped with only
\emph{one} derivation $\delta$; for instance, $\cF=\mathbb{Q}$,
$\mathbb{R}$ or $\mathbb{C}$ with $\delta=0$, or $\cF=\mathbb{Q}(t)$
with the usual derivation $\delta(t)=1$. For this reason, we will
simply write \emph{differential field} (instead of ordinary
differential field).

Let $(\cF, \delta)$ be a differential field of characteristic $0$.

The ring of differential polynomials in $\alpha$ indeterminates $z:=z_1,\ldots ,z_\alpha$,
which is denoted by $\cF\{z_1,\ldots ,z_\alpha\}$ or simply
$\cF\{z\}$, is defined as the commutative
polynomial ring $ \cF[z_j^{(p)}, 1\le j \le \alpha, \ p\in \N_0]$
(in infinitely many indeterminates), extending the derivation of
$\cF$ by letting $\delta(z_j^{(i)}) = z_j^{(i+1)}$, that is,
$z_j^{(i)}$ stands for the $i$th derivative of $z_j$ (as
customarily, the first derivatives are also denoted by $\dot z_j$).
We write $z^{(p)}:=z^{(p)}_1,\ldots,z^{(p)}_\alpha$ and
$z^{[p]}:=z, z^{(1)}, \dots, z^{(p)}$ for every $p\in \N_0$.

The
fraction field of $\cF\{z\}$ is a differential field,
denoted by $\cF\langle z \rangle$, with the derivation obtained by extending the derivation $\delta$ to the quotients in the usual way. For $g\in \cF\{ z \}$, the
\emph{order of $g$ with respect to $z_j$} is  ${\rm ord}(g,z_j) :=
\max\{i \in \N_0 : z_j^{(i)} \hbox{ appears in } g\}$, and the
\emph{order of $g$} is ${\rm ord}(g) := \max\{{\rm ord}(g,z_j) :
1\le j \le \alpha\}$; this notion of order extends naturally to
$\cF\langle z \rangle$  by taking the maximum of the orders of the numerator and the denominator in a reduced representation of the rational fraction.

Given differential polynomials $H:= h_1,\dots, h_\beta \in \cF\{
z\}$, we write $[H]$ to denote the smallest \emph{differential}
ideal of $\cF\{z\}$ containing $H$ (i.e.~the smallest ideal
containing the polynomials $H$ and all their derivatives of
arbitrary order).  The minimum \emph{radical} differential ideal of
$\cF\{ z \}$ containing $H$ is denoted by $\{ H \}$. For every $i\in
\N$, we write $H^{(i)}:=h_1^{(i)},\ldots,h_\beta^{(i)}$ and
$H^{[i]}:=H,H^{(1)},\ldots ,H^{(i)}$.

A \emph{differential field extension} $\cG/\cF$ consists of two
differential fields $(\cF, \delta_{\cF})$ and $(\cG, \delta_{\cG})$
such that $\cF \subseteq  \cG$ and $\delta_{\cF}$ is the
restriction to $\cF$ of $\delta_{\cG}$. Given a subset
$\Sigma\subset \cG$, $\cF\langle \Sigma \rangle$ denotes the minimal
differential subfield of $\cG$ containing $\cF$ and $\Sigma$.

An
element $\xi \in \cG$ is said to be \emph{differentially transcendental}
over $\cF$ if the family of its derivatives $\{\xi^{(p)} : p\in
\N_0\}$ is algebraically independent over $\cF$; otherwise, it is
said to be \emph{differentially algebraic} over $\cF$. A
\emph{differential transcendence basis} of $\cG/\cF$ is a minimal
subset $\Sigma\subset \cG$ such that the differential field
extension $\cG/\cF\langle \Sigma \rangle$ is differentially
algebraic. All the differential transcendence bases of a
differential field extension have the same cardinality (see \cite[Ch.~II,
Sec.~9, Theorem 4]{Kolchin}), which is called its \emph{differential
transcendence degree}.

\subsection{Differential polynomials, ideals and manifolds}\label{sec:rank}

Here we recall some definitions and properties concerning differential polynomials and their solutions.

Let $g\in \cF\{ z \} =\cF\{ z_1,\dots, z_\alpha\}$. The \emph{class} of $g$ for the order $z_1<z_2<\dots <z_\alpha$ of the variables is defined to be the greatest $j$ such that $z_j^{(i)}$ appears in $g$ for some $i\ge 0$ if $g\notin \cF$, and $0$ if $g\in \cF$.
If $g$ is of class $j>0$ and of order $p$ in $z_j$, the \emph{separant} of $g$, which will be denoted by $S_g$,  is $\partial g/ \partial z_j^{(p)}$ and the \emph{initial} of $g$, denoted by $I_g$, is the coefficient of the highest power of $z_j^{(p)}$ in $g$.

Given $g_1$ and $g_2$ in $\cF\{ z \}$, $g_2$ is said to be of \emph{higher rank in $z_j$ than} $g_1$ if either $\textrm{ord}(g_2, z_j) >\textrm{ord}(g_1, z_j)$ or $\textrm{ord}(g_2, z_j) =\textrm{ord}(g_1, z_j)= p$ and  the degree of $g_2$ in $z_j^{(p)}$ is greater than the degree of $g_1$ in $z_j^{(p)}$. Finally,  $g_2$ is said to be of \emph{higher rank than} $g_1$ if $g_2$ is of higher class than $g_1$ or they are of the same class $j>0$ and $g_2$ is of higher rank in $z_j$ than $g_1$.

We will use some elementary facts of the well-known theory of \textit{characteristic sets}. For the definitions and basic properties of rankings and characteristic sets, we refer the reader to \cite[Ch.~I, \S 8-10]{Kolchin}.

Let $H$ be a (not necessarily finite) system of differential polynomials in $\cF\{ z\}$. The \emph{manifold of $H$} is the set of all the zeros $\eta\in \cG^\alpha$ of $H$ for all possible differential extensions $\cG/\cF$.

Every radical differential ideal $\{ H\}$ of $\cF\{z\}$  has a unique representation as a finite irredundant intersection of prime differential ideals, which are called the \emph{essential prime divisors} of $\{ H\}$ (see \cite[Ch. II, \S 16-17]{Ritt}).

For a differential polynomial $g$ in $\cF\{z\}$  of positive class and algebraically irreducible, there is only one essential prime divisor of $\{ g \}$ which does not contain $S_g$; the manifold of this prime differential ideal is called the \emph{general solution of $g$} (see \cite[Ch. II, \S 12-16]{Ritt}).

\subsection{Hilbert-Kolchin function and differentiation index}

Let $\fP$ be a prime differential ideal of $\cF\{ z \}$. The
\emph{differential dimension} of  $\fP$, denoted by
$\textrm{diffdim}(\fP)$, is the differential transcendence degree of
the extension $\cF \hookrightarrow \textrm{Frac}(\cF\{z\}/\fP)$ (where Frac denotes the fraction field). The
\emph{differential Hilbert-Kolchin function} of $\fP$ with respect
to $\cF$ is the function $H_{\fP, \cF}: \N_0\to \N_0$ defined as:
\[H_{\fP, \cF}(i) :=\textrm{\ the (algebraic) transcendence degree of }\textrm{Frac}(\cF[z^{[i]}]/ (\fP \cap
\cF[z^{[i]}]))\ \textrm{over\ } \cF.\]

For $i\gg 0$, this function equals the linear function $
\textrm{diffdim}(\fP) (i+1) + \textrm{ord}(\fP),$ where
$\textrm{ord}(\fP)\in \N_0$ is an invariant called the \emph{order} of  $\fP$
(\cite[Ch.~II, Sec.~12, Theorem 6]{Kolchin}). The minimum $i$ from
which this equality holds is the Hilbert-Kolchin
\emph{regularity} of $\fP$.

Let $F$ be a finite set of differential polynomials contained in
$\fP$ of order bounded by a non-negative integer $e$. Throughout the
paper we assume that $e\ge 1$, in other words, all the systems we
consider are actually differential but not purely algebraic.

\begin{definition} \label{qr}
The set $F$ is \emph{quasi-regular} at $\fP$ if, for every $k\in \N_0$, the Jacobian matrix of
the polynomials $F, \dot F, \dots, F^{(k)}$ with respect to the variables $z^{[e+k]}$ has full row rank
over the fraction field of $\cF\{z\}/\fP$.
\end{definition}

A fundamental invariant associated to ordinary differential
algebraic equation systems is the \emph{differentiation index}.
There are several definitions of this notion  (see \cite{DJMS} and
the references given there), but in every case it represents a
measure of the implicitness of the given system. Here we will use
the following definition, introduced in \cite[Section 3]{DJMS}, in
the context of quasi-regular differential polynomial systems  with
respect to a fixed prime differential ideal $\fP$:

\begin{definition} \label{index} The \emph{$\fP-$differentiation index} $\sigma$ of a quasi-regular system $F$ of
polynomials in $\cF\{z\}$ of order at most $e$ is
\[\sigma := \min \{ k\in \N_0: (F, \dot F, \dots, F^{(k)})_{\fP_{e+k}} \cap \cF[z^{[e]}]_{\fP_e} = [F]_{\fP} \cap \cF[z^{[e]}]_{\fP_e} \},\]
where, for every $k\in \N_0$, $\fP_{e+k}:=\fP\cap \cF[z^{[e+k]}]$ (i.e. the contraction of the prime ideal $\fP$), $\cF[z^{[e+k]}]_{\fP_{e+k}}$ denotes the localized ring at the prime ideal $\fP_{e+k}$ and $(F, \dot F, \dots, F^{(k)})_{\fP_{e+k}}$ is the algebraic ideal generated by $F,\dot F, \dots, F^{(k)}$ in $ \cF[z^{[e+k]}]_{\fP_{e+k}}$.
\end{definition}

Roughly speaking, the differentiation index of the system $F$ is the
minimum number of derivatives of the polynomials $F$ needed to write
all the relations given by the differential ideal $\mathfrak{P}$ up
to order $e$.

\section{Differential L\"uroth's Theorem}\label{sec:problem}

In \cite[Chapter VIII]{Ritt1} (see also \cite{Ritt} and
\cite{Kolchin}), the classical L\"uroth's Theorem for transcendental
field extensions is generalized to the differential algebra
framework:

\begin{theorem}[Differential L\"uroth's Theorem] \label{luroth}
Let $\cF$ be an ordinary differential field of characteristic $0$
and let $u$ be differentially transcendental over $\cF$. Let $\cG$
be a differential field such that $\cF \subset \cG \subset
\cF\langle u \rangle$. Then, there is an element $v\in \cG$ such
that $\cG = \cF\langle v \rangle$.
\end{theorem}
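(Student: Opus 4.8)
The plan is to adapt the classical proof of L\"uroth's theorem (as in \cite[\S 10.2]{vdW}) to the differential setting, working with differential polynomials instead of ordinary ones and replacing the usual Gauss lemma argument by its differential analogue via separants and initials. First I would dispose of the trivial cases: if $\cG = \cF$ there is nothing to prove (take $v$ any element of $\cF$), so assume $\cG \supsetneq \cF$. Since $\cG \subseteq \cF\langle u\rangle$ and $u$ is differentially transcendental over $\cF$, the extension $\cF\langle u\rangle/\cG$ is differentially algebraic: indeed, any element $w\in\cG\setminus\cF$ is of the form $P(u)/Q(u)$ with $P,Q\in\cF\{u\}$ not both in $\cF$, and then $Q(u)w - P(u) = 0$ exhibits an algebraic differential dependence of $u$ over $\cF\langle w\rangle \subseteq \cG$. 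Hence $\mathrm{diffdim}_{\cF}(\cG) = \mathrm{diffdim}_{\cF}(\cF\langle u\rangle) = 1$, so $\cG$ has differential transcendence degree $1$ over $\cF$ as well.

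Next I would pick a nonzero differential polynomial of minimal rank among those in $\cF\{y\}\otimes_{\cF}\cG$ — more precisely, consider the minimal polynomial $\Phi(u)\in \cG\{y\}$ of $u$ over $\cG$ in the sense of characteristic sets: the defining differential polynomial of $u$ over $\cG$, of lowest rank in the single indeterminate $y$. Clearing denominators, I can take $\Phi$ to have coefficients in $\cF\{u\}$; writing $\Phi(y) = \sum_i a_i(u)\, N_i(y)$ as an $\cF$-linear combination of differential monomials $N_i(y)$ in $y$ with coefficients $a_i(u)\in\cF\{u\}$, I may assume the $a_i$ are collectively of minimal total order and degree and have no common differential factor. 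The candidate L\"uroth generator will be a ratio $v = a_j(u)/a_k(u)$ of two of these coefficients; the key claim is that $\cG = \cF\langle v\rangle$ for a suitable choice. One inclusion, $\cF\langle v\rangle \subseteq \cG$, is automatic since each $a_i(u)\in\cF\{u\}$ and the $a_i$ are, up to the common $\cF$-scalar, elements one can normalize to lie in $\cG$ (dividing $\Phi$ through by one nonzero coefficient makes all ratios $a_i/a_k$ lie in $\cF\langle u\rangle$, and they are algebraic over $\cG$ — in fact they coincide with coefficients of the minimal polynomial, hence are in $\cG$).

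For the reverse inclusion $\cG\subseteq \cF\langle v\rangle$, I would argue by comparing orders and degrees. Let $w\in\cG$ be arbitrary, written as $w = P(u)/Q(u)$ with $P,Q\in\cF\{u\}$ relatively prime. Then $Q(y)w - P(y)\in\cG\{y\}$ vanishes at $y=u$, so it is divisible (in $\cG\{y\}$, after localizing at the separant of $\Phi$) by the minimal polynomial $\Phi(y)$; equivalently, $\Phi(y)$ divides $Q(y)w - P(y)$ up to the usual separant/initial factors. Comparing, in the indeterminate $y$, the differential polynomial $\Phi$ with $Q(y)w - P(y)$ — both of which, after clearing denominators over $\cF\{u\}$, become elements of $\cF\{u\}\{y\}$ — a differential Gauss-lemma / content argument shows that the $\cF\{u\}$-content of $\Phi$, namely the ideal generated by the $a_i(u)$, divides the content of $Q(y)w-P(y)$, which forces $w$ to be expressible as a ratio of two $\cF$-linear combinations of the $a_i(u)$, hence $w\in\cF\langle v\rangle$ once one knows all the $a_i/a_k$ lie in $\cF\langle v\rangle$. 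To close this last point I would show that the ratios of the $a_i(u)$ generate the same differential field as a single well-chosen ratio $v = a_j/a_k$: this follows because, reversing roles, $u$ is differentially algebraic over $\cF\langle v\rangle$ (as $\Phi$ viewed with $y=u$ and the $a_i$-ratios as "coefficients" gives a nontrivial differential-algebraic relation), so $\mathrm{trdeg}$-type counting pins down $\cF\langle v\rangle = \cG$.

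The main obstacle I anticipate is the differential Gauss lemma / content comparison: in the ordinary polynomial setting one uses that the content is multiplicative, but in $\cF\{u\}$ — a polynomial ring in infinitely many variables $u, \dot u, \ddot u,\dots$ with a derivation — one must work with \emph{reduced} representatives, track separants and initials carefully (a differential polynomial need not stay reduced after differentiation), and justify that the divisibility $\Phi \mid (Q(y)w - P(y))$ holds up to multiplication by powers of $S_\Phi$ and $I_\Phi$ without introducing spurious $u$-dependence in the cofactor. Handling this rigorously — essentially the content that lies behind Proposition~\ref{orderv} and the order bound $\min_j\mathrm{ord}(v_j)$ — is where the real work is; the rest is bookkeeping with differential transcendence degrees, for which \cite[Ch.~II]{Kolchin} provides all the needed machinery.
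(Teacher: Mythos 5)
Your skeleton is exactly the classical one that the paper recounts (and does not reprove) in Section~\ref{sec:problem}: take the lowest-rank element $B$ of the ideal $\Sigma\subset\cG\{y\}$ of differential polynomials vanishing at $u$, clear denominators to get $C\in\cF\{u,y\}$ with no factor in $\cF\{u\}$, and propose the ratio of two coefficients of $C$ as the generator. But the heart of the theorem is precisely the step you label ``a differential Gauss-lemma / content argument shows\dots'' and then admit you do not know how to carry out: namely that $D(u,y)=Q(u)P(y)-P(u)Q(y)$ is an $\cF$-multiple of $C$ and that consequently $\cG=\cF\langle P(u)/Q(u)\rangle$ --- this is Lemma~\ref{ratio}, which the paper quotes from \cite[II. \S 42]{Ritt} without proof. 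In the algebraic case that step is powered by the identity $[k(u):k(g/h)]=\max(\deg g,\deg h)$ for $g/h$ in lowest terms, and by multiplicativity of contents; neither has a direct differential analogue (there is no finite ``degree'' of $\cF\langle u\rangle$ over $\cG$, and rank is not multiplicative), so the content comparison you invoke does not transfer. Ritt's actual argument is a rank comparison using reduction modulo $B$ with separants and initials, the structure of the general solution of $C$, and the symmetry of $D$ in $u$ and $y$; that is the real work, not bookkeeping, and it is absent from your proposal.

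Moreover, the way you propose to close the reverse inclusion cannot work as stated. You say that ``$\mathrm{trdeg}$-type counting pins down $\cF\langle v\rangle=\cG$'', but every intermediate field $\cF\subsetneq\cF\langle v\rangle\subseteq\cG\subseteq\cF\langle u\rangle$ has differential transcendence degree $1$ over $\cF$, so no transcendence-degree count can distinguish $\cF\langle v\rangle$ from $\cG$; this coarseness is exactly why the differential L\"uroth theorem needs the finer rank argument (or, in Kolchin's proof \cite{Kolchin2}, an analogous minimality argument). Likewise, the intermediate claim that content divisibility ``forces $w$ to be expressible as a ratio of two $\cF$-linear combinations of the $a_i(u)$'' for an arbitrary $w\in\cG$ is not justified: the known proofs do not argue coefficientwise over arbitrary $w$, they prove $D=cC$ with $c\in\cF$ for the single antisymmetric combination built from two coefficients of $C$ and then identify the minimal polynomial of $u$ over $\cF\langle P(u)/Q(u)\rangle$ with $B$ up to a factor. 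As it stands, your proposal correctly reduces the theorem to Lemma~\ref{ratio} but does not prove it; filling the gap would essentially amount to reproducing \cite[II. \S\S 39--44]{Ritt} or \cite{Kolchin2}.
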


Our goal is the following: for $n>1$, let be given differential
polynomials $P_1,\dots, P_n,$ $ Q_1,\dots, Q_n\in \cF\{ u \}$,
with $P_j/Q_j\notin \cF$ and $P_j, Q_j$ relatively prime polynomials
for every $1\le j\le n$, and denote \[\cG:= \cF \langle
P_1(u)/Q_1(u),\dots, P_n(u)/Q_n(u) \rangle,\] which is a subfield of
$\cF\langle u\rangle$. We want to compute a L\"uroth generator of
$\cG/\cF$, that is, a pair of differential polynomials $P, Q\in
\cF\{ u \}$ such that $Q \not\equiv 0$ and $\cG = \cF \langle P(u) /
Q(u)\rangle$.  We are also interested in the study of \textit{a
priori} upper bounds for the orders and degrees of both polynomials
$P$ and $Q$.

\bigskip
An optimal estimate for the order of the polynomials $P$ and $Q$ can
be obtained by elementary computations (see Section
\ref{subsec:order}). However, the problem of estimating their
degrees seems to be a more delicate question which requires a more
careful analysis that we will do in the subsequent sections of the
paper.

\subsection{Bound for the order}\label{subsec:order}

We start by proving an upper bound for the order of a L\"uroth generator.

\begin{proposition}\label{orderv}
Under the previous assumptions and notation,  any element $v\in
\cG$ such that $\cG=\cF\langle v \rangle$ satisfies ${\rm ord} (v)
\le  \min\{ {\rm ord}(P_j/Q_j): 1\le j \le n\}$.
\end{proposition}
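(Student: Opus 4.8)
The plan is to reduce the general case to the case $n=1$, and there to combine the already-established order bound for a minimal-rank differential polynomial (Corollary \ref{orderM}) with the symmetry present in Ritt's polynomial $D(u,y)=Q(u)P(y)-P(u)Q(y)$ from Lemma \ref{ratio}. The key observation is that $\min_j {\rm ord}(P_j/Q_j)$ may be strictly smaller than $e=\max_j {\rm ord}(P_j/Q_j)$, so Corollary \ref{orderM} alone is not enough; one must exploit that every $v$ generating $\cG$ is, up to the Lemma \ref{ratio} normalization, the \emph{same} object viewed from the ``$y$-side'' rather than the ``$u$-side''.

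First I would fix an arbitrary L\"uroth generator $v\in\cG$ with $\cG=\cF\langle v\rangle$; write $v=P(u)/Q(u)$ with $P,Q\in\cF\{u\}$ relatively prime, and set $m:={\rm ord}(v)=\max\{{\rm ord}(P),{\rm ord}(Q)\}$. Since $\cF\langle v\rangle=\cF\langle P_1/Q_1,\dots,P_n/Q_n\rangle$, I may replace the original generating family by the single function $v$ itself — the field $\cG$ is unchanged, hence the associated prime ideal and Ritt's polynomial $B$ are unchanged. Now run the whole Section-4 machinery with this new single generator: set $\fP'\subset\cF\{x,u\}$ (one variable $x$) equal to the kernel of $x\mapsto v$, $u\mapsto u$, and let $e':={\rm ord}(v)=m$ be the relevant maximum order. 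By Proposition \ref{orderP}, ${\rm ord}(\fP')=m$, and by Corollary \ref{orderM} there is $M\in\fP'$ of lowest rank in $u$ with ${\rm ord}(M)\le m$.

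Next I would analyze the symmetry. By Lemma \ref{ratio}, clearing denominators turns $\widetilde M(y)=M(v,y)$ into a scalar multiple of $D(u,y)=Q(u)P(y)-P(u)Q(y)$, and $\cG=\cF\langle P(u)/Q(u)\rangle$ with ${\rm ord}_u D={\rm ord}(v)$ and ${\rm ord}_y D\le{\rm ord}(v)$. But $D(u,y)=-D(y,u)$ up to sign, so $D$ is (anti)symmetric in $u$ and $y$; interchanging the roles of the two indeterminates, $D(u,y)$ is, after the same clearing-denominators procedure applied in the other direction, exactly the polynomial attached to the generator $P(y)/Q(y)$ of the \emph{same} field $\cG$ — consequently ${\rm ord}_y(D)$ also equals ${\rm ord}(v)$, giving nothing new by itself. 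The real input must instead come from comparing $v$ with the \emph{original} $P_j/Q_j$. Here is the point: the construction of $M$ of minimal rank in $u$ relative to the original ideal $\fP$ (with $n$ variables) yields, via Lemma \ref{ratio} and Proposition \ref{M1M2}, a L\"uroth generator which is a $\cF$-linear combination of the coefficients of $\widetilde M$, and one checks that $\widetilde M(y)$ already lies in $\cF(u^{[e']})[y^{[e']}]$ where $e'=\min_j{\rm ord}(P_j/Q_j)$ — because $M$ can be chosen with ${\rm ord}(M,u)\le e'$. This last claim is the crux: I would prove that $\fP$ contains a differential polynomial of lowest rank in $u$ whose order \emph{in $u$} is at most $\min_j{\rm ord}(P_j/Q_j)$, by repeating the transcendence-degree computation of Proposition \ref{orderP} but tracking the order in $u$ separately, using that the subfield $\cF\langle (P_j/Q_j)^{[i]}\rangle$ of $\cF\langle u\rangle$ ``sees'' only derivatives $u^{(k)}$ with $k\le \min_j{\rm ord}(P_j/Q_j)+i$ once one has pushed past the initial transcendence. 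Then $D(u,y)$ has order $\le\min_j{\rm ord}(P_j/Q_j)$ in $u$, and by the antisymmetry of $D$ the same bound holds in $y$; since every generator $v$ is a ratio of coefficients of $D$ viewed as a polynomial in $y$ over $\cF\{u\}$ — equivalently, a ratio of coefficients in $u$ over $\cF\{y\}$ — we get ${\rm ord}(v)\le\min_j{\rm ord}(P_j/Q_j)$.

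The main obstacle I anticipate is justifying that $\fP$ has a lowest-rank-in-$u$ element with $u$-order bounded by $\min_j{\rm ord}(P_j/Q_j)$ rather than merely by ${\rm ord}(\fP)=\max_j{\rm ord}(P_j/Q_j)$: the inequality (\ref{cotagrado1}) only bounds the \emph{total} order of $M$, and the characteristic-set argument from \cite{Sadik} does not immediately separate the order in $u$ from the order in the $x_j$. To handle this I would either (i) invoke a relative/mixed version of the Hilbert–Kolchin estimates of \cite{Sadik,DJMS} that bounds ${\rm ord}(M,u)$ by the order of $\fP$ relative to the subfield $\cF\langle x\rangle$ — which, by the explicit description $\cF[x^{[i]},u^{[i]}]/\fP\simeq\cF[(P_j/Q_j)^{[i]},u^{[i]}]$, computes to $\min_j{\rm ord}(P_j/Q_j)$ after the transcendence-degree bookkeeping — or (ii) argue directly that if $v$ generates $\cG$ then $u$ is differentially algebraic over $\cF\langle v\rangle$ of the appropriate ``order'' and that the minimal vanishing polynomial $B\in\cG\{y\}$ therefore has order $\le\min_j{\rm ord}(P_j/Q_j)$ because $v$ itself, being a specialization-type combination of functions of $u^{[\min]}$, cannot force higher derivatives. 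I expect approach (i) to be cleanest and to be essentially the content the authors will use.
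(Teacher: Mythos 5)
Your proposal has a genuine gap, and it sits exactly where you place the ``crux''. The claim that $\fP$ contains a lowest-rank-in-$u$ element $M$ with ${\rm ord}(M,u)\le \min_j {\rm ord}(P_j/Q_j)$ is never proved, and via Lemma \ref{ratio} it is essentially \emph{equivalent} to the proposition you are trying to prove (the rank of $M$ in $u$ matches the rank of $B$ in $y$, i.e.\ the order of any generator), so as written the argument is circular. Neither of your two repair routes works as stated: for (i), inequality (\ref{cotagrado1}) together with Proposition \ref{orderP} bounds only the \emph{total} order of $M$ by ${\rm ord}(\fP)=e=\max_j{\rm ord}(P_j/Q_j)$, and no ``relative/mixed'' Hilbert--Kolchin estimate separating the order in $u$ from the order in the $x_j$ is available in \cite{Sadik} or \cite{DJMS}; moreover the heuristic you offer in its support --- that the subfields generated by the $(P_j/Q_j)^{[i]}$ ``see'' only derivatives $u^{(k)}$ with $k\le \min_j{\rm ord}(P_j/Q_j)+i$ --- is simply false whenever some generator has order strictly larger than the minimum, since that generator already involves $u^{(\max_j e_j)}$ at level $i=0$. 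Route (ii) is a restatement of the claim, not an argument. The opening reduction to the case $n=1$ with the single generator $v$ is also vacuous: there the relevant order parameter becomes ${\rm ord}(v)$ itself, the quantity to be bounded; and the antisymmetry of $D(u,y)$, as you concede, yields nothing new. (A minor further point: an arbitrary generator $v$ is not literally a ratio of coefficients of $D$; it is a homographic image of such a ratio, so you would additionally need the order-invariance of generators under homographies with coefficients in $\cF$.)

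The paper's proof is entirely different and much more elementary; none of the Section-4/5 machinery, Lemma \ref{ratio}, or Proposition \ref{M1M2} is used. Fix any generator $v$ with $\varepsilon={\rm ord}(v)$ and write each $v_j=P_j/Q_j$ as $v_j=\Theta_j(v)$ with $\Theta_j\in\cF\langle T\rangle$ of order $N_j$. If $\varepsilon>e_j:={\rm ord}(v_j)$, then differentiating the identity $v_j=\Theta_j(v)$ with respect to the variable $u^{(N_j+\varepsilon)}$ (which does not occur in $v_j$) gives, by the chain rule, $0=\frac{\partial\Theta_j}{\partial T^{(N_j)}}(v)\cdot\frac{\partial v^{(N_j)}}{\partial u^{(N_j+\varepsilon)}}$; the first factor is nonzero because $\Theta_j$ has order exactly $N_j$ and $v$ is differentially transcendental over $\cF$, and the second is nonzero because $\partial v^{(N_j)}/\partial u^{(N_j+\varepsilon)}=\partial v/\partial u^{(\varepsilon)}\ne 0$ --- a contradiction, whence $\varepsilon\le e_j$ for every $j$. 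If you do want a proof closer in spirit to your sketch, the workable route is field-theoretic rather than via the mixed order of $M$: over $\cF\langle v_j\rangle\subseteq\cG$ the element $u$ satisfies the order-$e_j$ relation $Q_j(u)v_j-P_j(u)=0$, so ${\rm trdeg}_{\cG}\,\cG\langle u\rangle\le \min_j e_j$, and this transcendence degree equals the order of the minimal polynomial $B$ of $u$ over $\cG$, which (by Lemma \ref{ratio} and coprimality of $P,Q$) equals ${\rm ord}(v)$; but that is a different argument from the one you propose, and it still requires checking the no-cancellation step ${\rm ord}_y(D)={\rm ord}(v)$.
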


\begin{proof}
Let $v\in \cG$ be such that $\cG = \cF\langle v \rangle$ and
$\varepsilon:={\rm ord}(v)$.

For $j=1,\dots, n$, let $v_j = P_j(u)/Q_j(u)$. By assumption, $v_j\notin \cF$. 
Let $T$ be a new differential indeterminate
over $\cF$. Since $v_j\in \cG=\cF\langle v \rangle$, there exists $\Theta_j\in \cF\langle
T\rangle$ such that $v_j = \Theta_j(v)$. Let $N_j= {\rm
ord}(\Theta_j)$. Then, $\textrm{ord}(v_j) \le N_j + \varepsilon$. In addition,
$$ \frac{\partial v_j}{\partial u^{(N_j+\varepsilon)}}= \frac{\partial (\Theta_j(v))}{\partial
u^{(N_j+\varepsilon)}} = \sum_{i\ge 0} \frac{\partial
\Theta_j}{\partial T^{(i)}}(v) \frac{\partial v^{(i)}}{\partial
u^{(N_j+\varepsilon)}} =\frac{\partial \Theta_j}{\partial
T^{(N_j)}}(v) \frac{\partial v^{(N_j)}}{\partial
u^{(N_j+\varepsilon)}}.
$$
Since $N_j$ is the order of $\Theta_j$, it follows that
$\frac{\partial \Theta_j}{\partial T^{(N_j)}}\ne 0$, and as $v$ is
differentially transcendental over $\cF$, we have that
$\frac{\partial \Theta_j}{\partial T^{(N_j)}}(v) \ne 0$. Furthermore, $\frac{\partial v^{(N_j)}}{\partial
u^{(N_j+\varepsilon)}} = \frac{\partial v}{\partial
u^{(\varepsilon)}}\ne 0$, since $\varepsilon= \textrm{ord}(v)$.  We conclude
that $\frac{\partial v_j}{\partial u^{(N_j+\varepsilon)}}\ne
0$ and, therefore, $\textrm{ord}(v_j) = N_j +\varepsilon$.

The proposition follows.
 \end{proof}

Note that the above proposition shows that all possible L\"uroth
generators $v$ have the same order. In fact, two arbitrary
generators are related by an homographic map with coefficients in
$\cF$ (see for instance \cite[\S 1]{Kolchin2}, \cite[Ch. II, \S
44]{Ritt}).

\subsection{Ritt's approach}

Here we discuss some ingredients which appear in the classical proofs of Theorem \ref{luroth} (see \cite{Ritt,Kolchin2}) and that we also consider in our approach.

Following \cite[II. \S39 and \S40]{Ritt}, let $y$ be a new
differential indeterminate over the field $\cF\langle u \rangle$
(and, in particular, over $\cG$) and consider the differential prime
ideal $\Sigma$ of all differential polynomials in $\cG\{y\}$
vanishing at $u$:
\begin{equation}\label{Sigma}
\Sigma :=\{A\in \cG\{ y\} \textrm{\ such\ that\ } A(u) = 0\}.
\end{equation}

\begin{lemma}\label{generalsolution}
The manifold of $\Sigma$ is the general solution  of an irreducible
differential polynomial $B \in \cG\{ y \}$. More precisely, $B$ is a
differential polynomial in $\Sigma$ with the lowest rank in $y$.
\end{lemma}

\begin{proof}
Let $B\in \Sigma$ be a differential polynomial with the lowest rank
in $y$. Note that $B\in \cG\{ y \}$ is algebraically irreducible,
since $\Sigma$ is prime. We denote the order of $B$ by $k$ and the
separant of $B$ by $S_B = \partial B/ \partial y^{(k)}$. Consider
the differential ideal
$$\Sigma_1(B): = \{ A \in \cG\{ y \} \mid S_B A \equiv 0 \mod \{ B\}
\}.$$ As shown in \cite[II. \S12]{Ritt}, the ideal $\Sigma_1(B)$ is
prime; moreover, we have that $A\in \Sigma_1(B)$ if and only if
$S_B^a A \equiv 0 \mod [B]$ for some $a\in \N_0$ and, in particular,
if $A\in \Sigma_1(B)$ is of order at most $k= {\rm ord}(B)$, then
$A$ is a multiple of $B$ (see \cite[II. \S13]{Ritt}). Furthermore,
$\Sigma_1(B)$ is an essential prime divisor of $\{ B\}$ and, in the
representation of $\{ B\}$ as an intersection of its essential prime
divisors, it is the only prime which does not contain $S_B$
(\cite[II. \S15]{Ritt}). Therefore, the manifold of $\Sigma_1(B)$ is the general solution of $B$.

In order to prove the lemma, it suffices to show that $\Sigma = \Sigma_1(B)$.

Let $A\in \Sigma_1(B)$. Then,  $S_B A \in \{B\}$. Taking into account that $\Sigma$ is
prime, $\{B\}\subset \Sigma$, and $S_B\notin \Sigma$, it follows that $A\in \Sigma$.

To see the other inclusion, consider a differential polynomial $A\in
\Sigma$. By the minimality of $B$, we have that $A$ is of rank at
least the rank of $B$. Reducing $A$ modulo $B$, we obtain a relation
of the type $$S_B^b I_B^c A \equiv R \mod [B],$$ where $b, c \in
\N_0$, $I_B$ is the initial of $B$ and $R$ is a differential
polynomial whose rank is lower than the rank of $B$. Since $A$ and
$B$ lie in the differential ideal $\Sigma$, it follows that $R\in
\Sigma$, and so, the minimality of $B$ implies that $R=0$. In
particular, $S_B^b I_B^c A \in [B]$ and, therefore, $I_B^c A\in
\Sigma_1(B)$. Now, $I_B\notin \Sigma_1(B)$ since, otherwise, it
would be a differential polynomial in $\Sigma$ with a rank lower
than the rank of $B$ (recall that $\Sigma_1(B)\subset \Sigma$); it
follows that $A\in \Sigma_1(B)$.
\end{proof}

\smallskip

Multiplying the polynomial $B\in \cG\{y\}$ given by Lemma \ref{generalsolution} by a suitable denominator, we obtain a differential polynomial $C\in \cF\{ u , y \}$ with no factor in $\cF\{ u \}$.  The following result is proved in \cite[II. \S42 and \S43]{Ritt}:
\begin{proposition} \label{ratio}
If $P_0(u)$ and $Q_0(u)$ are two non-zero coefficients of $C$
(regarded as a polynomial in $\cF\{u\}\{ y \}$) such that
$P_0(u)/Q_0(u) \notin \cF$, the polynomial \[D(u,y):= Q_0(u) P_0(y)
- P_0(u) Q_0(y)\] is a multiple of $C$ by a factor in $\cF$, and
$\cG= \cF\langle P_0(u)/Q_0(u) \rangle$. $\square$
\end{proposition}

Note that, by the definition of $C$, the ratio between two coefficients of $C$ coincides with the ratio of the corresponding coefficients of $B$.

\subsection{An alternative characterization of a L\"uroth
generator}\label{sec:characterization}

Under the previous assumptions, consider the map of differential algebras defined by
$$\begin{array}{ccc} \psi: \cF\{x_1,\dots, x_n,u\} &\to& \cF\{P_1(u)/Q_1(u),\dots, P_n(u)/Q_n(u),u\}\\
x_i &\mapsto & P_i(u)/Q_i(u)\\
u & \mapsto & u\end{array}
$$
Let $\fP\subset \cF\{x,u\}$ be the kernel of the morphism $\psi$; then, we have an isomorphism
\[\cF\{P_1(u)/Q_1(u),\dots, P_n(u)/Q_n(u),u\} \simeq \cF\{x_1,\dots, x_n,u\}/\fP.\]
This implies that $\fP$ is a \emph{prime} differential ideal and,
moreover, that the fraction field of $\cF\{x_1,\dots, x_n,u\}/\fP$
is isomorphic to $\cF\langle u \rangle$. In addition, the previous
isomorphism gives an inclusion
$$\cF\{P_1(u)/Q_1(u),\dots, P_n(u)/Q_n(u)\}\hookrightarrow \cF\{x_1,\dots, x_n,u\}/\fP,$$
and the inclusion induced from this map in the fraction fields leads
to the original extension $\cG= \cF\langle P_1(u)/Q_1(u),\dots,
P_n(u)/Q_n(u)\rangle \hookrightarrow \cF\langle u \rangle$. 

As above, let $y$ be a new differential indeterminate over $\cF
\langle u \rangle$ and $\Sigma$ the ideal of $\cG\{ y \}$ introduced
in (\ref{Sigma}). If
 $A\in \cG\{ y \}$ is a non-zero differential polynomial in $\Sigma$,
 multiplying it by an adequate element in $\cF\{P_1(u)/Q_1(u),\dots, P_n(u)/Q_n(u)\}$,
 we obtain a differential polynomial  in  $\cF\{P_1(u)/Q_1(u),\dots, P_n(u)/Q_n(u)\}\{ y \}$,
 with the same rank in $y$ as $A$. Taking a representative (with respect to $\psi$) in $\cF\{x_1,\dots, x_n\}$ for each of its coefficients, we get a differential polynomial  $\widehat A\in \cF\{x_1,\dots, x_n, y\}$, with the same rank in $y$ as $A$,  such that $\widehat A(x_1,\dots, x_n,u)\in
 \fP$.

Conversely, given a differential polynomial $M \in \cF\{x_1,\dots,
x_n, u\}$  such that $M\in \fP$ and not every coefficient of $M$ as a polynomial in $\cF\{x_1,\dots,x_n\}\{u\}$ lies in $\fP \cap \cF\{x_1,\dots, x_n\}$, the
 differential polynomial
 \begin{equation}\label{Mtilde}
 \widetilde M (y) := M(P_1(u)/Q_1(u), \dots, P_n(u)/Q_n(u), y)\in \cG\{ y \}
 \end{equation}
 is not the zero polynomial, vanishes at $u$ and has a rank in $y$ no higher than that of $M$.

We conclude that if $M\in \cF\{x_1,\dots, x_n, u\}$ is a
differential polynomial with the lowest rank in $u$ among all the differential polynomials as above, the
associated differential polynomial $\widetilde M(y)$ is a multiple
by a factor in $\cG$ of the minimal polynomial $B$ of $u$ over $\cG$
introduced in Lemma \ref{generalsolution}. Therefore, by Proposition
\ref{ratio}, a L\"uroth generator of $\cG/\cF$ can be obtained as
the ratio of any pair of coefficients of $\widetilde M\in \cG\{ y
\}$ provided that this ratio does not lie in $\cF$. Moreover:

\begin{proposition} \label{M1M2} Let $M\in \cF\{x_1,\dots, x_n, u\}$ be a
differential polynomial in $\fP \setminus (\fP \cap \cF\{x_1,\dots,x_n\})\{u\}$  with the lowest rank in $u$ and let
$\widetilde M (y) \in \cG\{ y \}$ be as in (\ref{Mtilde}). Assume
that $\widetilde M \in \cF(u^{[\epsilon]})\{ y  \}$ for a suitable
non-negative integer $\epsilon$. Consider two generic points
$\upsilon_1, \upsilon_2\in \Q^{\epsilon +1}$. Let $P(y)$ and $Q(y)$
be the differential polynomials obtained from $\widetilde M(y)$ by
substituting  $u^{[\epsilon]}=\upsilon_1$ and
$u^{[\epsilon]}=\upsilon_2$ respectively. Then $P(u)/Q(u)$ is a
L\"uroth generator of $\cG/\cF$.
\end{proposition}

\begin{proof} By Proposition \ref{ratio}, we have that
$$\widetilde M(y) = M(P_1(u)/Q_1(u), \dots, P_n(u)/Q_n(u), y) = \gamma (Q_0(u) P_0(y) - P_0(u) Q_0(y))$$
for some $\gamma \in \cG$, and $P_0, Q_0$ are such that $\cG =
\cF\langle P_0(u)/Q_0(u)\rangle$. Then, by means of two
specializations $\upsilon_1, \upsilon_2$ of the variables
$u^{[\epsilon]}$ so that $Q_0,Q_1,\dots,Q_n$ and $\gamma$ do not
vanish and $P_0(\upsilon_1)/Q_0(\upsilon_1) \ne P_0(\upsilon_2)
/Q_0(\upsilon_2)$, we obtain polynomials of the form
$$P(y) =\alpha_1 P_0(y) - \beta_1 Q_0(y)\qquad \hbox{and} \qquad
Q(y) =\alpha_2 P_0(y) - \beta_2 Q_0(y),$$ where $\alpha_1, \alpha_2,
\beta_1, \beta_2 \in \cF$ and $\alpha_1\beta_2 - \alpha_2\beta_1\ne
0$. The proposition follows since $\cF\langle P(u)/Q(u)\rangle=\cF
\langle P_0(u)/Q_0(u)\rangle=\cG$.
\end{proof}

\section{Reduction to a polynomial ring and degree bound}\label{sec:reduction}

In this section, we will obtain upper bounds for the order and the
degree of the differential polynomial $M\in \cF\{x_1,\dots, x_n,
u\}$ (see Proposition \ref{M1M2})  involved in our
characterization of a L\"uroth generator of $\cG/\cF$. These
bounds imply, in particular, an upper bound for the degrees of the
numerator and the denominator of the generator (see Section
\ref{degbou}).

\subsection{Bounding the order of a minimal polynomial $M$}

Here we estimate the order in the variables $x=x_1,\dots, x_n$ and
$u$ of a differential polynomial $M(x,u)\in \fP \setminus (\fP \cap \cF\{x_1,\dots,x_n\})\{u\}$ of minimal rank in
$u$, where $\fP$ is the prime differential ideal introduced in
Section \ref{sec:characterization}.

\begin{remark}\label{diffdimP} The differential dimension of $\fP$ equals $1$,  since the fraction
field of  $\cF\{x_1,\dots, x_n,u\}/\fP$ is isomorphic to $\cF\langle
u \rangle$.
\end{remark}

Let $e:= \max\{ {\rm ord}(P_j/Q_j): 1\le j \le n\}$. Without loss of generality, we may assume that ${\rm ord}(P_1(u)/Q_1(u)) \ge \cdots\ge {\rm ord}(P_n(u)/Q_n(u))$.
Consider the elimination order in $\cF\{x_1,\dots, x_n,u\}$ with $x_1<\cdots <x_n<u$.

Since  $P_1(u)/Q_1(u)\notin \cF$, it is transcendental over $\cF$.
Now, as the variable $u^{(e+1)}$ appears in the derivative
$(P_1(u)/Q_1(u))'$ but it does not appear in $P_1(u)/Q_1(u)$, it
follows that $(P_1(u)/Q_1(u))'$ is (algebraically) transcendental
over $\cF(P_1(u)/Q_1(u))$. Continuing in the same way with the
successive  derivatives, we conclude that $P_1(u)/Q_1(u)$ is
differentially transcendental over $\cF$. This implies that the
differential ideal $\fP$ contains no differential polynomial
involving only the variable $x_1$.

Thus, a characteristic set of $\fP$ for the considered elimination order is of the form $$R_1(x_1,x_2), R_2(x_1,x_2,x_3),\dots, R_{n-1}(x_1,\dots, x_n), R_n(x_1,\dots, x_n, u).$$
Furthermore, $R_n(x_1,\dots, x_n, u)$ is a differential polynomial in $\fP \setminus (\fP \cap \cF\{x_1,\dots,x_n\})\{u\}$ with a minimal rank in $u$, that is, we can take $M(x,u) = R_n(x,u)$. Following \cite[Lemma 19]{Sadik}, we may assume this characteristic set to be irreducible. Then, by \cite[Theorem 24]{Sadik}, we have that ${\rm ord} (R_i)\le {\rm ord}(\fP)$ for every $1\le i \le n$; in particular,
\begin{equation}\label{cotagrado1}
{\rm ord}(M)\le {\rm ord}(\fP).\end{equation}

The order of the differential prime ideal $\mathfrak P$ can be computed exactly. In order to do this, we introduce a system of differential polynomials that provides us with an alternative characterization of the ideal $\fP$ which enables us to compute its order.

 For every $j$, $1\leq j\leq n$, denote $F_j:=Q_j(u)x_j-P_j(u)\in \cF\{x,u\}$, and let $F:=F_1,\ldots,F_n$. Then we have:

\begin{lemma}\label{lem:minprime}
The ideal $\fP$ is the (unique) minimal differential prime ideal of
$[F]$ which does not contain the product $Q_1\dots Q_n$. Moreover,
$\fP = [F]:(Q_1\dots Q_n)^{\infty}$.
\end{lemma}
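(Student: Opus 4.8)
The plan is to prove the two assertions together, using the characterization of $\fP$ as the kernel of the differential algebra morphism $\psi$. First I would show that $[F] \subseteq \fP$: each generator $F_j = Q_j(u)x_j - P_j(u)$ maps under $\psi$ to $Q_j(u)\cdot (P_j(u)/Q_j(u)) - P_j(u) = 0$, so $F_j \in \ker \psi = \fP$, and since $\fP$ is a differential ideal it contains $[F]$. Next I would check that $Q_1\cdots Q_n \notin \fP$: indeed $\psi(Q_1\cdots Q_n) = Q_1(u)\cdots Q_n(u) \neq 0$ in $\cF\{u\}$, because each $Q_j \not\equiv 0$ and $\cF\{u\}$ is a domain. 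So $\fP$ is a differential prime containing $[F]$ but not $Q_1\cdots Q_n$, which already gives the existence half; it remains to prove minimality and uniqueness, which I would get from the saturation identity $\fP = [F]:(Q_1\cdots Q_n)^\infty$.

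The heart of the argument is therefore the equality $\fP = [F]:(Q_1\cdots Q_n)^\infty$. The inclusion $[F]:(Q_1\cdots Q_n)^\infty \subseteq \fP$ is immediate: if $(Q_1\cdots Q_n)^m G \in [F] \subseteq \fP$ and $\fP$ is prime not containing $Q_1\cdots Q_n$, then $G \in \fP$. For the reverse inclusion, take $G \in \fP$, i.e. $G(P_1/Q_1,\dots,P_n/Q_n,u) = 0$ in $\cF\{u\}$. Clearing denominators in this differential rational identity produces a relation of the form $(Q_1\cdots Q_n)^m G \equiv (\text{something in } [F]) \bmod [F]$ — more precisely, one substitutes, inside any monomial of $G$, each occurrence of a derivative $x_j^{(p)}$ by the corresponding derivative of $P_j/Q_j$; since $Q_j(u)x_j^{(p)} - (P_j/Q_j)^{(p)}\cdot Q_j(u) \in [F]$ after suitable manipulation (differentiating $F_j$ repeatedly and using that $Q_j x_j \equiv P_j$), the difference between $G$ and $G(P_1/Q_1,\dots)$ lies in $[F]$ once multiplied by a high enough power of $Q_1\cdots Q_n$ to clear all denominators. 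As the latter vanishes, we conclude $(Q_1\cdots Q_n)^m G \in [F]$, hence $G \in [F]:(Q_1\cdots Q_n)^\infty$.

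Once $\fP = [F]:(Q_1\cdots Q_n)^\infty$ is established, the minimality and uniqueness follow from standard facts about saturation of (radical) differential ideals. Since $\fP$ is prime, $[F]:(Q_1\cdots Q_n)^\infty = \fP$ is already radical, and it equals the intersection of exactly those essential prime divisors of $\{F\}$ that do not contain $Q_1\cdots Q_n$; being prime itself, there is exactly one such divisor, namely $\fP$. Any differential prime $\mathfrak{p} \supseteq [F]$ with $Q_1\cdots Q_n \notin \mathfrak{p}$ satisfies $\mathfrak{p} \supseteq [F]:(Q_1\cdots Q_n)^\infty = \fP$, which gives both uniqueness of the minimal such prime and the fact that it is $\fP$.

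The step I expect to be the main obstacle is the clearing-of-denominators computation in the reverse inclusion $\fP \subseteq [F]:(Q_1\cdots Q_n)^\infty$: one must argue carefully that applying $\psi$ commutes with differentiation in the appropriate sense, i.e. that $x_j^{(p)} \equiv (P_j/Q_j)^{(p)} \bmod [F]_{(Q_1\cdots Q_n)}$ for every order $p$, and that only finitely many powers of $Q_1\cdots Q_n$ are needed to clear all the denominators appearing in a fixed $G$ of finite order. This is where the fact that $[F]$ is closed under differentiation is used essentially, and it should be handled by induction on $p$ using the Leibniz rule applied to $\dot F_j = \dot Q_j x_j + Q_j \dot x_j - \dot P_j$.
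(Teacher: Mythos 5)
Your proof is correct, but it follows a different route from the paper's. The paper disposes of the key inclusion $\fP\subseteq [F]:(Q_1\cdots Q_n)^{\infty}$ in one stroke via characteristic-set theory: $F$ is an autoreduced (characteristic) set for the order $u<x_1<\cdots<x_n$, each $F_j$ being linear of order $0$ in its leader $x_j$ with $Q_j$ serving as both initial and separant, so Ritt reduction of any $H\in\fP$ yields $(Q_1\cdots Q_n)^N H\equiv R \bmod [F]$ with $R\in\cF\{u\}$; since $\fP\cap\cF\{u\}=(0)$, the remainder $R$ vanishes. You instead argue by hand in the localization at $Q_1\cdots Q_n$: the congruences $x_j^{(p)}\equiv (P_j/Q_j)^{(p)}$ modulo the extended ideal $[F]$ (immediate by induction on $p$, since the extension of a differential ideal to the localization is again differential), combined with $G\in\ker\psi$, give $G$ in the extended ideal and hence $(Q_1\cdots Q_n)^m G\in [F]$ after clearing the finitely many denominators; your closing observation that any differential prime containing $[F]$ but not $Q_1\cdots Q_n$ must contain the saturation then yields minimality and uniqueness, exactly as the paper tacitly intends by ``the proposition follows.'' Your approach is more elementary and self-contained (no appeal to the reduction theorem or to the notion of characteristic set, and it makes explicit where primality of $\fP$ and the kernel description enter), at the cost of the substitution bookkeeping you correctly flag; the paper's approach is shorter given the machinery and immediately exhibits the saturating power through the initials and separants. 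Your brief detour through essential prime divisors of $\{F\}$ is unnecessary (and would itself need justification), but since your last sentence proves minimality and uniqueness directly from the saturation identity, nothing is missing.
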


\begin{proof}
{}From the definitions of $\fP$ and $F$ it is clear that $[F]\subset
\fP$ and $Q_1(u)\dots Q_n(u)\notin \fP$. Moreover, since $F$ is a
characteristic set for the order in $\cF\{x,u\}$ given by
$u<x_1<\cdots <x_n$ and $\fP\cap \cF\{u\}=(0)$, we conclude that for
any polynomial $H\in \fP$ there exists $N\in \mathbb{N}_0$ such that
$(Q_1(u)\dots Q_n(u))^N H\in [F]$ (observe that $Q_j$ is the initial
and the separant of the polynomial $F_j$ for every $j$). The
proposition follows.
\end{proof}

The system $F$ we have introduced has the following property that we will use in the sequel (recall Definition \ref{qr}):

\begin{lemma} \label{lem:quasireg}
The system $F$ is quasi-regular at $\fP$.
\end{lemma}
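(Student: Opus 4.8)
The plan is to verify Definition \ref{qr} directly by exhibiting, for each $k \in \N_0$, a full-rank submatrix of the Jacobian of $F, \dot F, \dots, F^{(k)}$ with respect to the variables in $u^{[e+k]}$ and $x^{[e+k]}$, working over $\mathrm{Frac}(\cF\{x,u\}/\fP)$. The crucial structural observation is that $F_j = Q_j(u)x_j - P_j(u)$ is linear in $x_j$ with separant (and initial) equal to $Q_j(u)$, and that $Q_j(u)$ is a nonzero element of $\cF\{u\}$, hence nonzero modulo $\fP$ since $\fP \cap \cF\{u\} = (0)$ (as recorded in the proof of Lemma \ref{lem:minprime}). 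Thus $Q_j$ is invertible in $\mathrm{Frac}(\cF\{x,u\}/\fP)$, and the same holds for each of its derivatives appearing along the chain, since they are polynomials in $u^{[\cdot]}$ only and hence again lie outside $\fP$... wait, that last assertion is false in general (a derivative of $Q_j$ could vanish on $u$), so one must be more careful: the right object to track is the separant $\partial F_j^{(i)}/\partial(\text{its leading }x\text{-variable})$, not the derivatives of $Q_j$ themselves.

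Here is the corrected core of the argument. Differentiating $F_j$ a total of $i$ times, one gets $F_j^{(i)} = Q_j(u)\, x_j^{(i)} + (\text{terms involving } x_j^{(0)},\dots,x_j^{(i-1)} \text{ and } u^{[e+i]})$, so $\partial F_j^{(i)}/\partial x_j^{(i)} = Q_j(u)$ for every $i$. Therefore, if we order the $n(k{+}1)$ polynomials $F_j^{(i)}$ ($1\le j\le n$, $0\le i\le k$) and select from the variable block $x^{[e+k]}$ the $n(k{+}1)$ columns corresponding to $x_j^{(i)}$ for $1\le j\le n$, $0\le i\le k$, the resulting square submatrix of the Jacobian is block-triangular (in $i$, since $F_j^{(i)}$ involves $x_j^{(i')}$ only for $i'\le i$) with all diagonal entries equal to $Q_j(u)$. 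Its determinant is $\prod_{i=0}^{k}\prod_{j=1}^{n} Q_j(u) = (Q_1(u)\cdots Q_n(u))^{k+1}$, which is nonzero in $\mathrm{Frac}(\cF\{x,u\}/\fP)$ because $Q_1(u)\cdots Q_n(u) \notin \fP$ (Lemma \ref{lem:minprime}). Hence the Jacobian of $F,\dot F,\dots,F^{(k)}$ with respect to $z^{[e+k]}=x^{[e+k]}\cup u^{[e+k]}$ has a nonvanishing maximal minor of the appropriate size, so it has full row rank $n(k{+}1)$ over the fraction field of $\cF\{x,u\}/\fP$.

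Since $k$ was arbitrary, the system $F$ is quasi-regular at $\fP$ by Definition \ref{qr}. I expect no serious obstacle here: the only point requiring care is to choose the right $x$-columns (the ones on which the submatrix is triangular with the $Q_j$'s on the diagonal) rather than attempting to read off a minor from the $u$-columns, where the entries are uncontrolled; once the correct submatrix is identified, nonvanishing of its determinant reduces immediately to the already-established fact $Q_1\cdots Q_n \notin \fP$.
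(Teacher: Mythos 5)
Your argument is correct and is essentially the paper's own proof: both compute $\partial F_j^{(k)}/\partial x_h^{(l)}$, restrict to the columns indexed by the derivatives of the $x$-variables to obtain a block lower triangular submatrix with the $Q_j(u)$ on the diagonal, and conclude full row rank because $(Q_1\cdots Q_n)^{k+1}\notin\fP$. The only difference is cosmetic (the paper records the explicit binomial formula $\binom{k}{l}Q_j^{(k-l)}$ for the entries), so there is nothing to add.
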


\begin{proof}
Let $e$ be the maximum of the orders of the differential polynomials $F_j$, $1\le j \le n$.
For every $i\in \N$, let $J_i$ be the Jacobian matrix of the polynomials $F^{[i-1]}$ with respect to the variables $(x,u)^{[i-1+e]}$. We have that
$$
\frac{\partial F_j^{(k)}}{\partial x_h^{(l)}} =\begin{cases}
\ 0 & \textrm{if } h\ne j \textrm{ or } h=j,\  k<l\\[1mm]
\binom{k}{l} Q_j^{(k-l)} & \textrm{if } h=j,\ k\ge l
\end{cases}
$$
and so, for every $i\in \N$, the minor of $J_i$ corresponding to
partial derivatives with respect to the variables $x^{[i-1]}$ is a
scalar multiple of $(Q_1(u)\dots Q_n(u))^i$, which is not zero
modulo $\fP$.
 \end{proof}

Now, we apply results from \cite{DJMS} in order to compute the order of  $\fP$.

\begin{proposition}\label{orderP}
The order of the differential ideal $\fP$
equals $e= \max\{ {\rm ord} (P_j(u)/Q_j(u)): 1\le j \le n\}$.
\end{proposition}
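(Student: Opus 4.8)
The goal is to show $\mathrm{ord}(\fP) = e$. The plan is to establish the two inequalities separately, exploiting the Hilbert--Kolchin function and the quasi-regularity of the system $F$ proved in Lemma~\ref{lem:quasireg}.

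First I would prove $\mathrm{ord}(\fP) \le e$. Since $\fP$ has differential dimension $1$ and the fraction field of $\cF\{x,u\}/\fP$ is isomorphic to $\cF\langle u\rangle$, the algebraic transcendence degree of $\mathrm{Frac}(\cF[(x,u)^{[i]}]/\fP_i)$ over $\cF$ equals the transcendence degree of $\cF(u^{[?]})$ over $\cF$ once one controls how many derivatives of $u$ are ``used''. Concretely, modulo $\fP$ we have $x_j = P_j(u)/Q_j(u)$, so every $x_j^{(k)}$ is a rational function of $u, \dot u, \dots, u^{(k+e)}$; hence the image of $\cF[(x,u)^{[i]}]$ in $\cF\langle u\rangle$ is contained in $\cF(u^{[i+e]})$, whose transcendence degree over $\cF$ is $i+e+1$. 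Therefore $H_{\fP,\cF}(i) \le i+e+1 = \mathrm{diffdim}(\fP)(i+1) + e$ for all $i$, and comparing with the eventual linear form $\mathrm{diffdim}(\fP)(i+1) + \mathrm{ord}(\fP)$ gives $\mathrm{ord}(\fP)\le e$.

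Next I would prove the reverse inequality $\mathrm{ord}(\fP)\ge e$. Here I would use that $F = F_1,\dots,F_n$ with $F_j = Q_j(u)x_j - P_j(u)$ is quasi-regular at $\fP$ (Lemma~\ref{lem:quasireg}) and that $\fP = [F]:(Q_1\cdots Q_n)^\infty$ (Lemma~\ref{lem:minprime}). For a quasi-regular system of order $e$, one counts dimensions via the Jacobian: the system $F, \dot F, \dots, F^{(k)}$ consists of $n(k+1)$ polynomials in the $2(e+k+1)$ variables $(x,u)^{[e+k]}$, and full row rank of the Jacobian over $\mathrm{Frac}(\cF\{x,u\}/\fP)$ forces $H_{\fP,\cF}(e+k) \ge 2(e+k+1) - n(k+1)$. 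Since $\mathrm{diffdim}(\fP)=1$, for $k\gg 0$ the left side equals $(e+k+1) + \mathrm{ord}(\fP)$; but this naive count only yields a lower bound that degrades as $k$ grows unless $n=1$, so I would instead argue more carefully using that the $n-1$ ``extra'' equations in each derivative block are exactly accounted for by the algebraic relations among the $x_j$'s. A cleaner route: since $u$ is differentially transcendental and $P_1(u)/Q_1(u)$ has order exactly $e$, the element $x_1^{(0)},\dots$ does not involve $u^{(e)}$ for $x_1$ itself but $u^{(e)}$ is algebraic over $\cF(x_1, \dot u, \dots, u^{(e-1)})$ via $F_1$; tracking which of the variables $u, \dot u,\dots, u^{(i+e)}$ become algebraically dependent on the $x$'s through $\fP_i$ shows the transcendence degree is exactly $i+e+1$ for all $i\ge 0$, hence $H_{\fP,\cF}(i) = i+e+1$ and $\mathrm{ord}(\fP)=e$.

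The main obstacle I expect is the lower bound: making the dimension count tight rather than merely an inequality. The subtlety is that the $n$ polynomials $F_j$ are far from a regular sequence in the differential sense once derivatives pile up (the $x_j$ satisfy many relations among themselves, encoded by $R_1,\dots,R_{n-1}$ in the characteristic set), so a crude Jacobian rank count over-counts equations. The resolution is to observe that modulo $\fP$ the whole extension is generated by the single element $u$, so $\mathrm{Frac}(\cF[(x,u)^{[i]}]/\fP_i)$ is literally a subfield of $\cF(u^{[i+e]})$ containing $\cF(u^{[i]})$ (because $u^{[i]}$ itself lies in $\cF[(x,u)^{[i]}]$), which pins the transcendence degree between $i+1$ and $i+e+1$; then showing it hits the upper value requires checking that $u^{(i+1)},\dots,u^{(i+e)}$ are algebraic over $\cF(u^{[i]}, x^{[i]})$, which follows from the explicit form $x_j^{(k)} \in \cF(u^{[k+e]})$ together with the fact that at least one $P_j/Q_j$ genuinely involves $u^{(e)}$, so the top derivatives are captured. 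Combining the two bounds yields $\mathrm{ord}(\fP) = e$.
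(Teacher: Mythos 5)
Your proof is correct in substance but reaches the result by a different route than the paper, so a comparison is worth recording. The paper's proof uses Lemmas \ref{lem:minprime} and \ref{lem:quasireg} together with \cite[Theorem 12]{DJMS} to bound the Hilbert--Kolchin regularity of $\fP$ by $e-1$, so that $\mathrm{ord}(\fP)=H_{\fP,\cF}(e-1)-e$, and then evaluates $H_{\fP,\cF}(e-1)$ by exactly the chain argument you sketch: assuming $\mathrm{ord}(P_1/Q_1)=e$, the element $u^{(e)}$ is algebraic over $\cF(u^{[e-1]},P_1/Q_1)$, and for $k\ge 1$ the derivative $(P_1/Q_1)^{(k)}$ is linear in $u^{(e+k)}$ with nonzero coefficient $\partial(P_1/Q_1)/\partial u^{(e)}$, so $u^{(e)},\dots,u^{(2e-1)}$ are successively captured. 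You instead evaluate the Hilbert--Kolchin function directly for large $i$: since $\mathrm{Frac}\bigl(\cF[(x,u)^{[i]}]/(\fP\cap\cF[(x,u)^{[i]}])\bigr)\simeq\cF\bigl(u^{[i]},(P_j/Q_j)^{[i]}\bigr)\subseteq\cF(u^{[i+e]})$, the same chain argument shows that for $i\ge e$ this field actually equals $\cF(u^{[i+e]})$ (each $u^{(i+m)}$, $m=1,\dots,e$, is recovered rationally via $(P_1/Q_1)^{(i-e+m)}$), hence $H_{\fP,\cF}(i)=i+e+1$ for $i\gg 0$ and $\mathrm{ord}(\fP)=e$ follows from the definition of the order. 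What your route buys is independence from the differentiation-index machinery: no quasi-regularity and no appeal to \cite[Theorem 12]{DJMS} are needed, whereas the paper's route additionally yields the regularity bound $e-1$, which is reused later (e.g.\ in the proof of Proposition \ref{B}). Two caveats: your claim that $H_{\fP,\cF}(i)=i+e+1$ for \emph{all} $i\ge 0$ is too strong (e.g.\ if no $P_j/Q_j$ involves $\dot u$, say generators built only from $u$ and $u^{(2)}$ with $e=2$, then at $i=0$ the field $\cF\bigl(u,(P_j/Q_j)_j\bigr)$ has transcendence degree $2<3$), but this is harmless since only $i\gg 0$ matters; and, as you yourself note, the preliminary Jacobian row-count does not give the lower bound, so the proof must rest entirely on the chain argument, whose key step (``the top derivatives are captured'') should be written out as the linearity statement above for $k\ge 1$ together with the order-$e$ algebraic relation $Q_1(u)\,x_1-P_1(u)=0$ for the case $k=0$.
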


\begin{proof}
Lemma \ref{lem:minprime} states that the ideal $\fP$ is an essential
prime divisor of $[F ]$, where $F:= F_1,\dots, F_n$ with $F_j(x,u)
:= Q_j(u) x_j - P_j(u)$, $1\le j \le n$, and, as shown in Lemma
\ref{lem:quasireg}, the system $F$ is quasi-regular at $\fP$.
Therefore, taking into account that $e$ is the maximum of the orders
of the polynomials in $F$, by \cite[Theorem 12]{DJMS}, the
regularity of the Hilbert-Kolchin function of  $\fP$ is at most
$e-1$. This implies that the order of $\fP$ can be obtained from the
value of this function at $e-1$; more precisely, since the
differential dimension of $\fP$ equals $1$, we have that
$$\textrm{ord}(\fP) = \textrm{trdeg}_\cF \Big(  \cF[x^{[e-1]}, u^{[e-1]}]/( \fP \cap \cF[x^{[e-1]}, u^{[e-1]}])\Big) - e.$$

In order to compute the transcendence degree involved in the above
formula, we observe first that \[\cF[x^{[e-1]}, u^{[e-1]}]/(\fP \cap
\cF[x^{[e-1]}, u^{[e-1]}] )\simeq \cF[ (P_j/Q_j)^{[e-1]},
u^{[e-1]}].\]

It is clear that the variables $u^{[e-1]}$ are algebraically
independent in this ring. Then, if $L= \cF(u^{[e-1]})$, the order of
the ideal $\fP$ coincides with the transcendence degree of $L(
(P_j/Q_j)^{[e-1]}, j=1,\dots, n)$ over $L$. Without loss of
generality, we may assume that $\textrm{ord}(P_1/Q_1) = e$. Since
the variable $u^{(e)}$ appears in $P_1/Q_1$, we have that
$L(u^{(e)})/L(P_1/Q_1)$ is algebraic. Similarly, since $u^{(e+1)}$
appears in $(P_1/Q_1)'$,
it
follows that the extension $L(u^{(e)},u^{(e+1)})/L((P_1/Q_1),(P_1/Q_1)')$ is
algebraic. Proceeding in the same way with the successive
derivatives of $ P_1/Q_1$, we conclude that $L(u^{(e)},u^{(e+1)},
\dots,u^{(2e-1)} )$ is algebraic over $L((P_1/Q_1)^{[e-1]})$.

Since $\textrm{ord}(P_j/Q_j) \le e$ for $j=1, \dots, n$, we have that $L( (P_j/Q_j)^{[e-1]}, j=1,\dots, n) \subset L(u^{(e)},u^{(e+1)}, \dots,u^{(2e-1)} )$ and, by the arguments in the previous paragraph, this extension is algebraic. Therefore,  $$\textrm{trdeg}_L L( (P_j/Q_j)^{[e-1]}, j=1,\dots, n)=\textrm{trdeg}_L L(u^{(e)},u^{(e+1)}, \dots,u^{(2e-1)} ) = e.$$
 \end{proof}

Then, by inequality (\ref{cotagrado1}) we conclude:

\begin{proposition}\label{orderM}
There is a differential polynomial $M\in \fP \setminus (\fP \cap \cF\{x_1,\dots,x_n\})\{u\}$ with the lowest rank in $u$ such that ${\rm ord}(M) \le e$. $\square$
\end{proposition}

\subsection{Reduction to algebraic polynomial
ideals}\label{sec:redalg}

As stated in Proposition \ref{M1M2}, the
L\"uroth generator of $\cG/\cF$ is closely related with
a polynomial $M(x_1,\dots, x_n, u)\in \fP \setminus (\fP \cap \cF\{x_1,\dots,x_n\})\{u\}$ with the
lowest rank in $u$.

By Proposition \ref{orderM},  such a polynomial $M$ can be found in the algebraic ideal $\fP \cap \cF[x^{[e]}, u^{[e]}]$ of the polynomial ring $ \cF[x^{[e]}, u^{[e]}]$. The following result will enable us to work with a finitely generated ideal given by known generators; the key point is the estimation of the $\mathfrak{P}$-differentiation index of the system $F:=F_1,\ldots,F_n$ (see Definition \ref{index}):

\begin{lemma}\label{diffindex}
The $\fP$-differentiation index of $F$ equals $e$. In particular,  we have that
$$[F]_{\fP} \cap \cF[x^{[e]}, u^{[e]}]_{\fP_e} = (F, \dot F, \dots, F^{(e)})_{\fP_{2e}} \cap
\cF[x^{[e]}, u^{[e]}]_{\fP_e},$$ where $\fP_e:=\fP \cap \cF[x^{[e]}, u^{[e]}]$ and $\fP_{2e}:= \fP \cap \cF[x^{[2e]}, u^{[2e]}]$.
\end{lemma}

\begin{proof}
For every  $k\in \N$, let $\mathfrak{J}_k$ be the Jacobian submatrix
of the polynomials $F, \dots, F^{(k-1)}$ with respect to the variables $(x,
u)^{(e)}, \dots, (x, u)^{(e+k-1)}$. The $\fP$-differentiation index of
$F$ can be obtained as the minimum $k$ such that $
\textrm{rank}(\mathfrak{J}_{k+1})- \textrm{rank}(\mathfrak{J}_k) =
n$ holds, where the ranks are computed over the fraction field of $\cF \{x, u\}/\mathfrak{P}$ (see \cite[Section 3.1]{DJMS}).

Now, since the order of the polynomials $F$ in the variables $x$ is
zero, no derivative $x^{(l)}$ with $l\ge e$ appears in $F, \dot F,
\dots, F^{(e-1)}$. This implies that the columns of the Jacobian
submatrices $\mathfrak{J}_k$ of these systems corresponding to
partial derivatives with respect to $x^{(l)}$, $l=e, \dots, e+k-1$,
are null. On the other hand, as $e$ is the order of the system $F$, we may suppose that
the variable $u^{(e)}$ appears in the polynomial $F_1$ and so,
$\partial F_1/\partial u^{(e)}\ne 0$. Thus, $\partial F_1^{(i)} / \partial u^{(h)}\ne 0$ for $h-i=e$ and $\partial F_j^{(i)} / \partial u^{(h)}=0$ for $h-i>e$; that is,  the matrices
$\mathfrak{J}_k$, $k=1,\dots, e$,  are block, lower triangular matrices of
the form
$$\mathfrak{J}_k = \left( \begin{array}{ccccccccccccccc}
0 & \cdots & 0 & * & \\
0 & \cdots & 0 & \star & 0 & \cdots & 0 & * & \\
\vdots & \vdots & \vdots &\vdots &\vdots &\vdots &\vdots &\vdots &\cdots & \\
0 & \cdots & 0 &  \star & 0 & \cdots & 0 & \star  & \cdots & 0 & \cdots & 0 & * \\
\end{array}
\right)$$
where $0$ denotes a zero column vector and $*$ a non-zero column vector.
Then, $\textrm{rank}(\mathfrak{J}_k) = k$ for $k=1,\dots, e$. Moreover,
$$\mathfrak{J}_{e+1} = \left( \begin{array}{cc}\mathfrak{J}_{e} & \\ \frac{\partial F^{(e)}}{\partial x^{(e)}} \ \cdots \ \cdots &
0\ \cdots\ 0\ * \end{array} \right)$$ and, by the diagonal structure
of $$\frac{\partial F^{(e)}}{\partial x^{(e)}} =
\begin{pmatrix}Q_1(u) & 0 & \cdots & 0  & \\0 & Q_2(u)& \ddots & \vdots \\\vdots  & \ddots& \ddots & 0 \\ 0 & \cdots& 0& Q_n(u)\end{pmatrix},$$ we see
that
$\textrm{rank}(\mathfrak{J}_{e+1})=\textrm{rank}(\mathfrak{J}_{e})+n$.

It follows that the $\fP$-differentiation index of the system equals $e$.
 \end{proof}

\begin{notation}\label{not:V}
We denote by $\V\subset \A^{(e+1) n}\times \A^{2e+1}$  the affine variety
defined as the Zariski closure of the solution set of the polynomial system
$$F=0, \dot F=0,\dots, F^{(e)}=0, \prod_{1\le j \le n} Q_j \ne 0,$$
where $F=F_1,\dots, F_n$ and $F_j(x, u^{[e]}) = Q_j(u^{[e]}) x_j - P_j(u^{[e]})$ for every $1\le j \le n$.
\end{notation}

The algebraic ideal corresponding to the variety $\V$ is $(F,\dot F,
\dots, F^{(e)}) : q^\infty$, where $q:= \prod_{1\le j \le n}  Q_j$.
This ideal is prime, since it is the kernel of the map
$$\begin{array}{ccc} \cF[x^{[e]}, u^{[2e]}] &\to& \cF[(P_j/Q_j)^{[e]}_{1\le j \le n}\ ,\ u^{[2e]}]\\[2mm]
x_j^{(k)} &\mapsto & (P_j/Q_j)^{(k)}\\
u^{(i)} & \mapsto & u^{(i)}\end{array}
$$
Then, $\V$ is an irreducible variety. Moreover, $F,\dot F,\dots,
F^{(e)}$ is a reduced complete intersection in $\{ q\ne 0\}$ and so,
the dimension of $\V$ is $2e+1$.

Now, the ideal of the variety $\V$ enables us to re-interpret the ideal
$\fP \cap \cF[x^{[e]}, u^{[e]}]$ where the minimal polynomial $M$
lies:

\begin{proposition} \label{ideals}  The following equality of ideals holds:
$$(F,\dot F,\dots, F^{(e)}) :q^{\infty} \cap \cF[x^{[e]}, u^{[e]}] = \fP\cap   \cF[x^{[e]}, u^{[e]}].$$
\end{proposition}

\begin{proof}
We start showing  that \[(F,\dot F,\dots, F^{(e)}) :q^{\infty} \cap \cF[x^{[e]}, u^{[e]}] = (F,\dot F,\dots, F^{(e)})_{\fP_{2e}} \cap   \cF[x^{[e]}, u^{[e]}]. \]

First note that $(F,\dot F,\dots, F^{(e)}) :q^{\infty}\subset  (F,\dot F,\dots, F^{(e)})_{\fP_{2e}}$ since $q\notin \fP$.  Conversely, if $h\in (F,\dot F,\dots, F^{(e)})_{\fP_{2e}} \cap   \cF[x^{[e]}, u^{[e]}]$, 
there is a polynomial $g\in \cF[x^{[2e]},u^{[2e]}]$ such that $g\notin \fP$ and $gh \in (F,\dot F,\dots, F^{(e)})$; but $g\notin (F,\dot F,\dots, F^{(e)}) :q^{\infty}$, since otherwise, $q^N g\in (F,\dot F,\dots, F^{(e)})\subset \fP$ for some $N\in \N$ contradicting the fact that $q\notin \fP$ and $g\notin \fP$. Since $(F,\dot F,\dots, F^{(e)}) :q^{\infty}$ is a prime ideal, it follows that $h \in  (F,\dot F,\dots, F^{(e)}) :q^{\infty}$.

Now, Lemma \ref{diffindex} implies that
\[(F,\dot F,\dots, F^{(e)})_{\fP_{2e}} \cap   \cF[x^{[e]}, u^{[e]}]= [F]_{\fP} \cap \cF[x^{[e]}, u^{[e]}].\]
 Finally, since $[F]_\fP \cF\{x,u\}_\fP = \fP_\fP \cF\{x,u\}_\fP$ (see \cite[Proposition 3]{DJMS}), we conclude that
\[
[F]_{\fP} \cap \cF[x^{[e]}, u^{[e]}] = \fP_\fP \cap \cF[x^{[e]}, u^{[e]}] =\fP\cap \cF[x^{[e]}, u^{[e]}];
\]
therefore,
\[(F,\dot F,\dots, F^{(e)}) :q^{\infty} \cap \cF[x^{[e]}, u^{[e]}] = \fP\cap \cF[x^{[e]}, u^{[e]}].\]
 \end{proof}

The previous proposition can be applied in order to effectively compute the polynomial $M(x_1,\dots, x_n,u)\in \fP \setminus (\fP \cap \cF\{x_1,\dots, x_n\})\{u\}$ with minimal rank in $u$, and consequently a L\"uroth generator of the extension $\cG/\cF$  (see Propositions \ref{ratio} and \ref{M1M2}), working over a polynomial ring in finitely many variables:

\begin{remark}\label{computation}
Consider the polynomial ideal $(F,\dot F,\dots, F^{(e)}): q^\infty \subset \cF[x^{[e]}, u^{[2e]}]$. Compute a Gr\"obner basis $G$ of this ideal for a pure lexicographic order with the variables $x^{[e]}$ smaller than the variables $u^{[2e]}$ and $ u<\dot u<\dots< u^{(2e)}$. Then, the polynomial $M$ is the smaller polynomial in $G$ which contains at least one variable in $u^{[2e]}$.
\end{remark}

\subsection{Degree bounds} \label{degbou}

In order to estimate the degree of a minimal polynomial
$M(x_1,\dots,x_n, u)\in \fP$ as in the previous section and, therefore, by Proposition
\ref{M1M2}, also the degree of a L\"uroth generator of $\cG/\cF$,
we
will relate $M$ to an eliminating polynomial for the algebraic
variety $\V$ under a suitable linear projection.

Let $k_0\in \N_0$ be the order of $u$ in $M$. By Proposition \ref{orderM}, we have that $k_0\le e$ and so, $M\in \cF[x^{[e]}, u^{[k_0]}]$.
Consider the fields $K:=\textrm{Frac}(\cF[x^{[e]}]/ (\fP \cap \cF[x^{[e]}]))$ and $L:= \textrm{Frac}(\cF[x^{[e]}, u^{[e]}]/ (\fP \cap \cF[x^{[e]},u^{[e]}]))$. The minimality of the rank in $u$ of $M$ is equivalent to the fact that $\{ u^{[k_0-1]}\}\subset L $ is algebraically independent over $K$ and $M$ is the minimal polynomial of $u^{(k_0)}\in L$ over $K(u^{[k_0-1]})$.

Let $Z\subset \{x^{[e]}\}$ be a transcendence basis of $K$ over $\cF$. Then, if we denote $U:=\{ u^{[k_0-1]}\}\subset L$, we have that $\{Z, U\}\subset L$ is  algebraically independent over $\cF$ and $\{Z, U, u^{(k_0)}\}\subset L$ is algebraically dependent over $\cF$ and, since $L\subset \cF(\V)$, the same holds in $\cF(\V)$.

Let $N$ be the cardinality of $\{Z, U\}$. Consider the projection
\begin{equation}\label{eq:pi}
\pi: \V \to \A^{N+1}, \quad \pi(x^{[e]},
u^{[2e]})= (Z,U, u^{(k_0)}).
\end{equation}
For the construction of $Z,U$, the dimension of $\pi(\V)$ equals $N$ and so, the Zariski closure of $\pi(\V)$
is a hypersurface in $\A^{N+1}$. Let $M_0\in \cF[Z,U,u^{(k_0)}]$ be an irreducible polynomial
defining this hypersurface (recall that $\V$ is an irreducible variety). From \cite[Lemma 2]{Heintz83}, we have the inequality
\begin{equation}\label{degMdegV}
\deg M_0 \le \deg \V
\end{equation}

Note that $M_0\in \fP \setminus (\fP \cap \cF\{x\})\{u\}$ and $\textrm{ord}_u (M_0) = k_0= \textrm{ord}_u (M)$. However, $M_0$ may not have the property of minimal rank in $u$, as the following simple example shows:

\bigskip

\noindent \textbf{Example 1.} Let $\cG= \cF\langle  \dot u, (\dot u)^2\rangle$. Here:
\begin{itemize}
\item $e=1$, $n=2$
\item $F_1 =x_1-\dot u$, $F_2 = x_2 - (\dot u)^2$, $q=1$
\end{itemize}
Following our previous construction,
$$\V= V(F_1, F_2, \dot F_1, \dot F_2) = V(x_1-\dot u, x_2 - (\dot u)^2, \dot x_1-\ddot u, \ddot x_2 - 2 \dot u \ddot u)$$
and we can take $Z= \{x_2, \dot x_2\}$ and $U=\{u\}$. Then, $M_0= (\dot u)^2 - x_2$, which has not minimal rank in $u$, since $\dot u -x_1$ vanishes over $\V$. In fact, $M= \dot u -x_1$.

\bigskip

Even though the polynomial $M_0$ is not the minimal rank polynomial $M$ we are looking for, the following relation between their degrees will be sufficient to obtain a degree upper bound for a L\"uroth generator.

\begin{proposition}
With the previous assumptions and notation,  we have that $\deg_{(U,u^{(k_0)})}(M)\le  \deg_{(U,u^{(k_0)})}(M_0)$. In particular, $\deg_{(U,u^{(k_0)})}(M)\le \deg(\V)$.
\end{proposition}

\begin{proof}
By construction, $M$ is the minimal polynomial of $u^{(k_0)}$ over $K(U)$ and $M_0$ is the minimal polynomial of $u^{(k_0)}$ over $\cF(Z,U)$. Without loss of generality, we may assume that $M$ and $M_0$ are polynomials with coefficients in $\cF$ having content $1$ in $K[U]$ and $\cF[Z,U]$ respectively. Since $\cF(Z, U) \subset K(U)$, we infer that $M$ divides $M_0$ in $K[U][u^{(k_0)}]$. The proposition follows.
\end{proof}

Recalling that a L\"uroth generator $v$ of $\cG/\cF$ can be obtained as the quotient of two specializations of the variables $x_1,\dots, x_n$ and their derivatives in the polynomial $M$ (see Proposition \ref{M1M2}) and that two arbitrary generators are related by an homographic map with coefficients in $\cF$ (see \cite[\S 1]{Kolchin2}, \cite[Ch. II, \S 44]{Ritt}), we
conclude that the degrees of the numerator and the denominator of any L\"uroth generator of $\cG/\cF$ are bounded by the degree of the variety $\V$.

\bigskip

We can exhibit purely syntactic degree bounds for $\V$ in terms of the number
$n$ of given generators for $\cG/\cF$, their maximum order $e$, and an upper bound
$d$ for the degrees of their numerators and denominators.

First, since the variety $\V$ is an irreducible component of the algebraic set defined by the $(e+1)n$ polynomials
$F, \dot F,\dots , F^{(e)}$ of total degrees bounded by $d+1$ (here $F= F_1,\dots, F_n$
and $F_j(x,u) = Q_j(u) x_j - P_j(u)$), B\'ezout's theorem (see for instance \cite[Theorem 1]{Heintz83}) implies
that
$$\deg \V \le (d+1)^{(e+1)n}.$$

An  analysis of the particular structure of the system leads to a
different upper bound for $\deg(\V)$, which is not exponential in $n$: taking into
account that $\V$ is an irreducible variety of dimension $2e+1$, its
degree is the number of points in its intersection  with a generic
linear variety of codimension $2e+1$, that is,
$$\deg \V = \# ( \V \cap V(L_1,\dots, L_{2e+1})),$$
where, for every $1\le i \le 2e+1$, $L_i$ is a generic affine linear
form in the variables $x^{[e]}, u^{[2e]}$,
\begin{equation}\label{eq:linearforms}
L_i(x^{[e]}, u^{[2e]}) = \sum_{1\le j \le n\atop 0\le k \le e}
a_{ijk}\, x_j^{(k)} + \sum_{0\le k \le 2e} b_{ik} \, u^{(k)} + c_i.
\end{equation}

For every $1\le j \le n$, the equation $F_j(x_j,u)=0$ implies that,
generic points of $\V$ satisfy $x_j = P_j(u)/Q_j(u)$. Proceeding
inductively, it follows easily that, generically
$$x_j^{(k)} = \left(\frac{P_j(u)}{Q_j(u)}\right)^{(k)}=
\frac{R_{jk}(u^{[e+j]})}{Q_j(u^{[e]})^{k+1}}\qquad \hbox{ with }
\deg(R_{jk}) \le d(k+1)$$ for every $1\le j \le n, \, 0\le k \le e$.
Substituting these formulae into (\ref{eq:linearforms}) and clearing
denominators, we deduce that the degree of $\V$ equals the number of
common solutions of the system defined by the $2e+1$ polynomials
\begin{eqnarray*}
\L_i(u^{[2e]}) &:=& \Big(\prod_{1\le j \le n} Q_j^{e+1}\Big)\
L_i\left(\Big(\frac{P_1}{Q_1}, \dots, \frac{P_n}{Q_n}\Big)^{[e]},
u^{[2e]}\right) \\
&=& \sum_{1\le j \le n\atop 0\le k \le e} a_{ijk}\, R_{jk}
Q_j^{e-k}\prod_{h\ne j} Q_h^{e+1} + \sum_{0\le k \le 2e} b_{ik} \,
u^{(k)} \prod_{1\le j \le n} Q_j^{e+1} + c_i \prod_{1\le j \le n}
Q_j^{e+1}
\end{eqnarray*} for generic coefficients $a_{ijk}, b_{ik}, c_i$ and the inequality $q \ne 0$.
{}From the upper bounds for the degrees of the polynomials $Q_j,
R_{jk}$, it follows that $\L_i$ is a polynomial of total degree
bounded by $nd(e+1)+1$ for every $1\le i \le 2e+1$. Therefore,
B\'ezout's bound implies that
$$\deg \V \le (nd(e+1)+1)^{2e+1}.$$

We conclude that:
%
%

\begin{proposition}\label{deggenerator}
Under the previous assumptions and notation, the degrees of the numerator and the denominator of any L\"uroth generator of $\cG/\cF$ are bounded by $\min\{ (d+1)^{(e+1)n}$, $(nd(e+1)+1)^{2e+1}\}$. $\square$
\end{proposition}

\section{Examples}\label{sec:examples}

As before, let $\cF$ be a differential field of
characteristic $0$ and  $u$ a differentially transcendental element
over $\cF$.

\bigskip

\noindent \textbf{Example 1.}  Let $\cG = \cF\langle u/\dot u,\, u + \dot u\rangle$.
In this case, we have:
\begin{itemize}
\item $e=1$, $n=2$,
\item $F_1= \dot u x_1 - u$, $F_2= x_2 - u - \dot u$, $q= \dot u$,
\end{itemize}
As the ideal $(F_1, F_2, \dot F_1, \dot F_2) = (\dot u  x_1 - u,\, x_2 - u - \dot
u,\, \ddot u x_1 + \dot u \dot x_1 - \dot u, \dot x_2 - \dot u
-\ddot u)$ is prime and does not contain $\dot u$, we have that $(F_1, F_2, \dot F_1, \dot F_2): q^\infty = (F_1, F_2, \dot F_1, \dot F_2)$ and, therefore,
$$\V = V(F_1, F_2, \dot F_1, \dot F_2) = V(\dot u  x_1 - u,\, x_2 - u - \dot
u,\, \ddot u x_1 + \dot u \dot x_1 - \dot u, \dot x_2 - \dot u
-\ddot u).$$

The dimension of $\V$ equals $3$ and $\{x_1,x_2,
\dot x_1\}$ is a transcendence basis of $\cF(\V)$  over $\cF$.
Then, $k_0 = 0$ and so, we look for a polynomial $M_0(x_1, x_2, \dot
x_1, u) \in (F_1, F_2, \dot F_1, \dot F_2)$. We compute this
polynomial by an elimination procedure:
$$M_0(x_1,x_2,\dot x_1, u) = (x_1+1)\, u - x_1 x_2.$$
Since $\deg_u(M_0) =1$, then $M=M_0$.
Finally, specializing $(u,\dot u , \ddot u)$ into $u_1= (1,1,0)$ and
$u_2 = (0,1,0)$, we compute specialization points $(1, 2,1)$ and
$(0, 1, 1)$ respectively for $(x_1, x_2, \dot x_1)$; hence, we
obtain the following L\"uroth generator for $\cG/\cF$:
$$v = \frac{P(u)}{Q(u)} = \frac{2u -2}{u}.$$

\bigskip

In the previous example, the polynomial $M$ lies in the polynomial ideal $(F_1, F_2)$, that is, no differentiation of the equations is needed in order to compute it and, consequently, a L\"uroth generator can be obtained as an algebraic rational function of the given generators. However, this is not always the case, as the following example shows.

\bigskip
\noindent \textbf{Example 2.} Let $\cG = \cF\langle \dot u, u+ \ddot u\rangle$.
We have:
\begin{itemize}
\item $e=2$, $n=2$,
\item $F_1= x_1 - \dot u$, $F_2= x_2 - u -  u^{(2)}$, $q= 1$,
\end{itemize}
Following our previous arguments, we consider the ideal \[(F_1, F_2, \dot F_1, \dot F_2, F_1^{(2)}, F_2^{(2)})=\] \[= (x_1 - \dot u, x_2 - u - u^{(2)}, \dot x_1 - u^{(2)}, \dot x_2 - \dot u - u^{(3)}, x_1^{(2)} - u^{(3)},  x_2^{(2)} -  u^{(2)} - u^{(4)}).\]
 This is a prime ideal of $\cF[x^{[2]}, u^{[4]}]$. The variety $\V$ is the zero-set of this ideal and it has dimension $5$. A transcendence basis of $\cF(\V)$ over $\cF$ including a maximal subset of $x^{[2]}$ is $\{x_1, x_2,\dot x_1,\dot x_2, x_2^{(2)}\}$ and the minimal polynomial of $u$ over $\cF(x_1, x_2,\dot x_1,\dot x_2, x_2^{(2)})$ is $$M_0=u - x_2 +\dot x_1.$$
Again, since $\deg_u(M_0) = 1$, then $M=M_0$.
Two specializations of this polynomial lead us to a L\"uroth generator of $\cG/\cF$ of the form $v= \dfrac{u+a}{u+b}$. We conclude that $\cG = \cF\langle u \rangle$.





\bibliographystyle{model1b-num-names}

\begin{thebibliography}{00}



\bibitem{AGR} C. Alonso, J. Guti\'errez,  and T. Recio,  A rational function decomposition algorithm by near-separated polynomials. J. Symb. Comp. 19 (1995), 527–544.


\bibitem{BCS} P. B\"urgisser, M. Clausen, M.A. Shokrollahi, Algebraic Complexity Theory, Grundlehren der Mathematischen
Wissenschaften, vol. 315, Springer, Berlin, 1997.

\bibitem{Cheze} G. Ch\`{e}ze, Nearly Optimal Algorithms for the Decomposition of Multivariate Rational Functions and the Extended L\"uroth's Theorem, J. Complexity 26 (2010), no. 4, 344--363.

\bibitem{DJMS} L. D'Alfonso, G. Jeronimo, G. Massaccesi, P. Solern\'o, On the
index and the order of quasi-regular implicit systems of diffrential
equations. Linear Algebra and its Applications 430 (2009), pp.
2102--2122.

\bibitem{DJS06} L. D'Alfonso, G. Jeronimo, P. Solern\'o, On the complexity of the resolvent representation of some prime differential ideals.
J. Complexity 22 (2006), no. 3, 396--430.

\bibitem{DJOSS} L. D'Alfonso, G. Jeronimo, F. Ollivier, A. Sedoglavic, P. Solern\'o, A geometric index reduction method for implicit systems of differential algebraic equations. J. Symbolic Computation
46, Issue 10 (2011), 1114--1138

\bibitem{FLMR} M. Fliess, J. L\'evine, Ph. Martin and P. Rouchon. A Lie-B\"acklund
approach to equivalence and
flatness of nonlinear systems. IEEE
Trans. Automat. Control, 44(5): 922--937, 1999.

\bibitem{Fliess} M. Fliess,  Variations sur la notion de contr\^olabilit\'e.  Quelques aspects de la th\'eorie du contr\^ole, 47–86, SMF Journ. Annu., 2000, Soc. Math. France, Paris, 2000.


\bibitem{GaoXu} X. Gao, T. Xu, L\"uroth's theorem in differential fields, Journal of Systems Science and Complexity 15 (2002), no. 4, 376--383.


\bibitem{GRS}  J. Guti\'errez, R. Rubio, D. Sevilla, On multivariate rational function decomposition. Computer algebra (London, ON, 2001). J. Symbolic
Comput. 33 (2002), no. 5, 545--562.

\bibitem{GRS2}  J. Guti\'errez, R. Rubio, D. Sevilla, Unirational Fields of Transcendence Degree One
and Functional Decomposition. Proc. of the 2001 Int. Symposium on Symb. and Alg. Comput. ISSAC'01. ACM Press, New York (2001), 167--174.

\bibitem{Heintz83} J. Heintz, Definability and fast quantifier elimination in algebraically closed fields. Theoret. Comput. Sci. 24 (1983), no. 3, 239--277.



\bibitem{Kolchin} E.R. Kolchin, Differential Algebra and Algebraic Groups, Academic Press, NewYork, 1973.

\bibitem{Kolchin2} E.R. Kolchin, Extensions of differential fields, II.  Ann. of Math. (2) 45 (1944), 358--361.

\bibitem{Kolchin3} E.R. Kolchin, Extensions of differential fields, III.
Bull. Amer. Math. Soc. 53 (1947). 397--401.


\bibitem{Luroth} J. L\"uroth, Beweis eines Satzes \"uber rationale curven, Math. Ann. , 9 (1876) pp. 163–-165


\bibitem{Netto} E. Netto, \"Uber einen L\"uroth gordaschen staz. Math. Ann. 46 (1895), 310--318.

\bibitem{Ollivier} F. Ollivier,   Une r\'eponse n\'{e}gative au probl\`{e}me de L\"{u}roth diff\'{e}rentiel en dimension 2. C.R. Acad. Sci. Paris, Serie I, t. 327, no.10, S\'{e}rie I, 1998, 881--886.

\bibitem{Ritt1}
J.F. Ritt,  Differential equations from the algebraic standpoint. Amer. Math. Soc. Colloq. Publ., Vol XIV, New York, 1932.

\bibitem{Ritt}
J.F. Ritt,  Differential Algebra. Amer. Math. Soc. Colloq. Publ.,
Vol. 33, New York, 1950.

\bibitem{Sadik} B. Sadik,  A bound for
the order of characteristic set elements of an ordinary prime differential
ideal and some applications, Appl. Algebra Engrg. Comm. Comput. \textbf{10},
no. {3}, (2000), 251--268.


\bibitem{Sederberg} T.W. Sederberg, Improperly parametrized rational curves, Computer
Aided Geometric Design 3 (1986), 67--75.


\bibitem{vdW} B. L. van der Waerden,  Modern Algebra, Vol. I, Springer-Verlag, 1991.


\end{thebibliography}







\end{document}